\documentclass[letterpaper, 10 pt, conference]{ieeeconf}

\IEEEoverridecommandlockouts                              

\usepackage{cite}
\usepackage{textcomp}
\usepackage{amsfonts}
\usepackage{amsmath}
\usepackage{graphicx}
\usepackage{subcaption}
\usepackage{mdframed}
\usepackage{amssymb}
\usepackage[mathcal]{euscript}

\newcommand{\blue}[1]{#1}

\newcommand{\ip}[2]{\langle #1, #2 \rangle}
\newcommand{\mcl}[1]{\mathcal{ #1}}
\newcommand{\mbf}[1]{\mathbf{ #1}}

\let\bbl\Bigl

\let\bbr\Bigr

\newlength{\myitemsep} 
\newlength{\mylistsep} 
\newlength{\mymathsep} 
\newlength{\myparsep} 
\newlength{\mysubsecsep} 
\newlength{\mysubsecpresep} 
\newlength{\mysecsep} 
\newlength{\mysecpresep} 

\newcommand{\R}{\mathbb{R}}

\newcommand{\bmat}[1]{\begin{bmatrix}#1\end{bmatrix}}
\newcommand{\norm}[1]{\lVert{#1}\rVert}

\newcommand{\half}{\frac{1}{2}}

\newtheorem{thm}{Theorem}
\newtheorem{example}{Example}%
\newtheorem{note}{Note}%

\newtheorem{defn}{Definition}

\newtheorem{lem}[thm]{Lemma}

\begin{document}

\title{\LARGE \bf
Hierarchical Stability and Lyapunov Conditions for Linear PDEs
}

\author{Matthew~M.~Peet
\thanks{M. Peet is with the School for the Engineering of Matter, Transport and Energy, Arizona State University. e-mail: {\tt \small mpeet@asu.edu }} \thanks{This work was supported by the National Science Foundation under grants No. 2429973 and 2337751.}
}

\maketitle
\thispagestyle{empty}

\begin{abstract}
Unlike ordinary differential equations (ODEs), linear partial differential equations (PDEs) admit multiple non-equivalent notions of stability. This variety makes interpretation of Lyapunov stability results challenging. \blue{To simplify this interpretation, we propose a unified framework for hierarchical classification of notions of stability and Lyapunov conditions. To do this, for classically well-posed PDEs with admissible boundary conditions}, we define a fundamental state on $L_2$ corresponding to the minimal information needed to uniquely forward propagate the solution. Stability notions and Lyapunov functions are then defined in terms of this fundamental state. This gives rise to a hierarchy of stability notions, the weakest being fundamental state to PDE state stability. Other stability notions and Lyapunov conditions may then be interpreted relative to this weakest notion. Hierarchies are established for: Lyapunov, exponential and finite-energy stability. Sufficient Lyapunov conditions are defined in terms of operator inequalities. Illustrative examples and computational tools are provided.\vspace{-3mm}
\end{abstract}
\vspace{-2mm}

\section{Introduction}\vspace{-2mm}
Recent years have seen an increase in methods for stability analysis and control of partial differential equations (PDEs) without discretization. Often, these methods rely on Lyapunov functions to establish convergence of the system state, bound $L_2$ gain, or design observers and controllers. Unlike ODEs, for which we have a well-established universal state-space representation, PDEs take a variety of forms -- with no clear and consistent notion of state space. As a result, basic definitions of stability vary from PDE to PDE and, even for a given PDE, will vary with the set of imposed boundary conditions. Semigroup theory~\cite{curtain_book} provides definitions of state which apply to a large class of PDEs. However this approach simply shifts the problem into various definitions of inner product, domain, generator, et c.. 
The use of Lyapunov functions further complicates the problem. While it is accepted that a Lyapunov function should be positive, bounded, and non-increasing, the form taken by these upper and lower bounds is not consistent. As a result, even for a given PDE with a given set of boundary conditions, we may obtain a variety of non-equivalent stability results depending on how the Lyapunov function is defined and bounded.

\blue{To illustrate, consider a well-studied PDE: the wave equation $\ddot{\mbf u}=\partial_s^2\mbf u$ with pinned boundary conditions $ \mbf u(t,0)= \mbf u(t,1)=0$. This PDE is not stable in the $L_2\times L_2$ sense in that there exists no $C\ge 0$ such that $\norm{\mbf u(t)}_{L_2}+\norm{\dot{\mbf u}(t)}_{L_2}\le C (\norm{\mbf u(0)}_{L_2}+\norm{\dot{\mbf u}(0)}_{L_2})$ for all $t \ge 0$. However, this PDE is stable in the $ H^1 \times L_2$ sense, in that for $V(u)=\norm{\dot u}_{L_2}+\norm{\partial_s u}_{L_2}$, $\dot V=0$ and hence $\norm{\partial_s\mbf u(t)}_{L_2}+\norm{\dot{\mbf u}(t)}_{L_2} \le \norm{\partial_s\mbf u(0)}_{L_2}+\norm{\dot{\mbf u}(0)}_{L_2}$} 
\footnote{However, this tells us nothing about the actual PDE state, $\mbf u(t)$. \blue{Fortunately, we know that $\norm{\mbf u}\le C\norm{\partial_s\mbf u}$ where $C$ is the Poincar\'e constant and hence $\norm{\mbf u(t)}_{L_2}+\norm{\dot{\mbf u}(t)}_{L_2} \le C\norm{\partial_s\mbf u(0)}_{L_2}+\norm{\dot{\mbf u}(0)}_{L_2}$.} However, as indicated, this bound cannot be strengthened to the first, more tradition notion of stability or well-posedness in $\mbf u$. }.
This illustration shows that obvious notions of stability often do not hold for PDEs (beam equations have the same problem). Nonetheless, it seems reasonable that we would want to be able to include wave equations in any Lyapunov framework for analysis and control of PDEs. Clearly, then, we must allow for some weaker notions of stability in the analysis and control of PDEs. 

The need for a weaker notion of stability (and well-posedness) is well-known. Such notions for the wave equation and can be found in~\cite{ha1994stability}. Other examples where stability depends on the choice of norm can be found in~\cite{beck2006_thesis}. For a broader class of PDE, the framework developed in~\cite{mironchenko2019non,jacob_2018,jacob2020noncoercive} for non-coercive Lyapunov functions weakens the  positivity condition $V(\mbf u)\ge \epsilon \norm{\mbf u}^2$ to $V(\mbf u)>0$ for all $\mbf u \neq 0$. A strict negativity condition on the derivative of the Lyapunov function is then used to upper bound the solutions. However, the difficulty in proving stability of PDEs such as the wave equation lies not in positivity of the Lyapunov function, but in the upper bound. For example the Lyapunov function used for the  wave equation discussed above is coercive in $\norm{\mbf u}$. However, it is not upper bounded by $C\norm{\mbf u}$ for any $C>0$. 

In this context, \blue{the contribution of this paper is primarily methodical organization and unification. We classify several properties of Lyapunov functions typically used in analysis of PDEs. These properties are divided into 3 categories: positivity (lower bounded), upper bounded, and negativity of the derivative. We then interpret those properties systematically using a hierarchy of notions of stability. The difficulty, as suggested above, is that PDEs, boundary conditions, and Lyapunov functions vary substantially from case to case. To address this problem, this paper is structured around an operator or integral formulation of the PDE referred to as a Partial Integral Equation (PIE) or fundamental state representation~\cite{shivakumar_TAC2024}. Specifically, if the boundary conditions of a PDE are suitably well-defined, there exists a bijection (Green's function) from $L_2$ to the set of sufficiently regular functions which satisfy the associated set of boundary conditions. }

For example, consider the heat equation $\partial_t\mbf u=\partial_s^2\mbf u$ where solutions are constrained to lie in some domain \blue{(similar to $D(A)$ for semigroups)}, e.g. $X:=\{\mbf u \in H^2\;:\; \mbf u(0)=\mbf u(1)=0\}$. Then if we define  $\mcl T$ (Green's function) as \vspace{-2mm}
\[
(\mcl T \mbf x):=\int_0^1G(s,\theta)\mbf x(\theta)d\theta\quad G(s,\theta)=\begin{cases}\theta (s-1)&\theta\le s\\s (\theta-1) & s \le \theta,
\end{cases}
\vspace{-2mm}
\]

\noindent we have that $\mcl T : L_2\rightarrow X$, $\mcl T\partial_s^2 \mbf u=\mbf u$ and $\partial_s^2 \mcl T \mbf x=\mbf x$ for any $\mbf u \in X$ and $\mbf x \in L_2$. We refer to $\mbf x:=\partial_s^2\mbf u$ as the fundamental state. Using this bijection, the dynamics of a \blue{classically well-posed PDE} may be equivalently
\footnote{For notational simplicity, we assume sufficient regularity for commutation of spatial and temporal derivatives. Otherwise we use $\partial_t (\mcl T\mbf x)=\mcl A \mbf x$.}
 written as $\mcl T\dot{\mbf x}=\mcl A \mbf x$. Of course, for the heat equation, this is simply\footnote{More generally, we may always write the dynamics of a PDE $\dot u=Au$ as $\mcl T \dot{\mbf x}=\mbf x$ where $\mcl T=A^{-1}$ on $X$. However, in this case, the kernels of operator $\mcl T$ may not be polynomial and require analytic solution of certain differential equations. If such an operator \textit{is} known, however, the results of this paper can be applied using this $\mcl T$ and $\mcl A=I$.} $\mcl T\dot{\mbf x}=\mbf x$ and $\mcl T=(\partial_s^2)^{-1}$ on the domain $X$. An equation of the form $\mcl T\dot{\mbf x}=\mcl A\mbf x$ where $\mcl T$ and $\mcl A$ have the structure given above is a Partial Integral Equations (PIEs). An operator $\mcl T$ of this type is a Partial Integral (PI) operator and the set of PI operators define a linear algebra, with the set of PI operators with polynomial kernels being a sub-algebra. It has been shown that any PDE with suitably well-defined domain, $X$, admits such a representation with polynomial kernels~\cite{peet_2021AUTb,jagt_2025AUT,shivakumar_TAC2024}. 

Our approach, then, is to allow for classification of bounds on a Lyapunov function in terms of both the PDE state and the fundamental (PIE) state. Next, we classify notions of stability in terms of these bounds -- focusing on two cases of Lyapunov stability: PDE stability ($\norm{\mbf u(t)}\le C \norm{\mbf u(0)}$) and the weaker notion of PIE to PDE stability ($\norm{\mbf u(t)}\le C \norm{\mbf x(0)}$). These notions are extended to exponential and finite-energy stability. This classification system then allows for interpretation of almost any PDE stability result which is based on the use of Lyapunov functions
.
Finally, to illustrate the application of these results, we express sufficient conditions for these various stability notions in terms of operator inequalities. These operator inequalities are applied to representative PDEs.

\vspace{-1mm}\vspace{-1mm}

\section{Notation}\label{sec:notation}\vspace{-1mm}
$L_2[\Omega]$ denotes the space of Lebesgue square integrable functions on $\Omega$. Unless otherwise stated, $\Omega=[0,1]$. We use the Sobolev space $H^n:=\{\mbf u\;:\;\partial_s^i \mbf u\in L_2,\; i \le n\}$. For $\mbf u \in H^n$, we use the shorthand $\mbf u_{s}:=\partial_s \mbf u$, $\mbf u_{ss}:=\partial_s^2 \mbf u$, et c.  $\mcl L(X)$ denotes the Banach space of bounded linear operators on Banach space $X$ with operator norm. Bolded lowercase (e.g. $\mbf  u$) indicates a function of a spatial variable. Calligraphic uppercase (e.g. $\mcl A$) indicates a bounded linear operator. For matrix $N$, the $i,j$th element is denoted $[N]_{i,j}:=N_{i,j}$.
\vspace{-1mm}\vspace{-1mm}

\section{Fundamental (PIE) State and Bijection $\mcl T$}\label{sec:fundamental_state}\vspace{-1mm}
\begin{defn}
\blue{$\mcl P$ is a Partial Integral (PI) operator if \vspace{-3mm}
\[
(\mcl P \mbf x)\blue{(s)}:=R_0(s)\mbf x(s)+\int_0^s\hspace{-2mm}R_1(s,\theta)\mbf x(\theta)d\theta+\int_s^1 \hspace{-2mm}R_2(s,\theta)\mbf x(\theta)d\theta\vspace{-1mm}
\]
with $R_0 \in L_\infty$ and $R_1,R_2\in L_2$. The vector space of all PI operators is denoted $\Pi_3$, which is a subspace of $\mcl L(L_2)$. We denote the subspace $\Pi_2:=\{\mcl P \in \Pi_3\;:\; R_0=0\} \subset \Pi_3$.}
\end{defn}
\blue{$\Pi_3$ and $\Pi_2$ are vector spaces and, when the $R_i$ are square, are linear composition *-subalgebras of $\mcl L(L_2)$.} The subspaces of $\Pi_3$ and $\Pi_2$ with $R_i$ polynomial are likewise *-subalgebras. Analytic formulae for composition and adjoint of $\mcl P$ in terms of the parameters $R_i$ can be found in~\cite{peet_2021AUTb}, Lemmas 4 and 6. Furthermore, for $\mcl P \in \Pi_3$ and $\mcl Q \in \Pi_2$, we have $\mcl{PQ},\mcl{QP}\in \Pi_2$. Extension to multivariate domains is the sum and composition of $\Pi_2,\Pi_3$ operators on each spatial dimension. Extension to mixed spatial domains such as $\mcl L(\R^n \times L_2[0,1] \times L_2[[0,1]^2])$ can be found in~\cite{jagt_2025AUT,shivakumar_TAC2024}. 

PDEs are typically defined in terms of $\dot{\mbf u}=A\mbf u$, where $A:X\rightarrow L_2$ is a differential operator and $\mbf u$ is constrained to lie in some subspace, $X$, of a Sobolev space. \blue{We presume the PDE is well-posed in the classical sense (See Definitions in~\cite{shivakumar_TAC2024}), so that for every $\mbf u_0\in X$, there exists $\mbf u(t)\in X$ for which $\dot{\mbf u}=A\mbf u$ and $\mbf u(0)=\mbf u_0$. If the PDE is well-posed and $X$ has the form in Eqn.~\eqref{eqn:gohberg_domain2}, then $A:X \rightarrow L_2$ is invertible on $X$ and $\mcl T:=A^{-1} \in \Pi_3$. This results in an equation of the form $\mcl T \dot{\mbf x}(t)=\mbf x(t)$ where we refer to $\mbf x(t) \in L_2$ as the fundamental state. However, solving for $A^{-1}$ explicitly requires solution of certain differential equations. In~\cite{peet_2021AUTb}, a framework was proposed whereby, roughly speaking, only the highest derivative, $D^\alpha$, in $A$ is inverted\footnote{If $A$ is not a differential operator, boundary conditions still require differentiation in order to obtain a bijection from $L_2$ to $X$.}. This results in a Partial Integral Equation (PIE) of the form $\mcl T \dot{\mbf x}(t)=\mcl A \mbf x(t)$ where $\mcl T=(D^{\alpha})^{-1}:L_2 \rightarrow X$ and $\mcl A=A\mcl T \in \Pi_3$. Operator $\mcl T$ then defines a one-to-one map between solutions of the PIE and PDE -- implying many problems of well-posedness~\cite{jagt_2026CDC} and stability~\cite{peet_2021AUTb} may be equivalently formulated in this framework. Significantly, analytic formulae can be obtained for the polynomial kernels, $R_1,R_2$ in $\mcl T$ -- implying that these operators can be readily manipulated and optimized using software such as PIETOOLS~\cite{PIETOOLS2025}. This construction is illustrated in the following lemma which concisely states the result in\cite{gohberg_book} and where we use $D^n =\partial_s^n$,}
\begin{align}
X=&\bbl\{g \in H^n[a,b] \;:\;  \qquad \text{for } i=1,\cdots,n\label{eqn:gohberg_domain2}\\[-2mm]
&\qquad \sum_{j=1}^n \alpha_{i,j}(D^{j-1}g)(a)+\sum_{j=1}^n \beta_{i,j}(D^{j-1}g)(b)=0\bbr\}\notag
\end{align}
and \vspace{-2mm}\vspace{-2mm}
{\small \begin{equation}
W_n(t)=\bmat{1 &t&\half t^2& \cdots &\frac{t^{n-1}}{(n-1)!}\\0&1&t&\cdots&\frac{t^{n-2}}{(n-2)!}\\\vdots &\vdots&\vdots&\vdots&\vdots\\0 &0&0&\cdots&t\\0 &0&0&\cdots&1}.
\end{equation}}
\begin{lem}\label{lem:gohberg_polynomial}
\blue{ Given $\alpha_{i,j},\beta_{i,j}\in \R$ and $X$ as defined in~\eqref{eqn:gohberg_domain2}, let $[N_a]_{i,j}=\alpha_{i,j}$, $[N_b]_{i,j}=\beta_{i,j}$. If $det(N_a+N_b W(b-a))\neq 0$, let}\vspace{-2mm}
 \begin{align}
(\mcl T\mbf x)(s)&=\int_a^b  G(s,\theta)\mbf x(\theta)  d\theta \\
G(s,\theta)&=
\begin{cases}
\mbf e_1(s-a)^T(I-P)\mbf e_n(a-\theta)&\theta < s\\
-\mbf e_1(s-a)^TP\mbf e_n(a-\theta)& s < \theta
\end{cases}
\end{align}
where $P=(N_a+N_b W(b-a))^{-1}N_bW(b-a)$ and \vspace{-2mm}{\small
\begin{equation*}
\mbf e_1(s)=\bmat{1 \hspace{-2mm}& s\hspace{-2mm}& \cdots \hspace{-2mm}& \frac{s^{n-1}}{(n-1)!}}^T, \qquad \mbf e_n(\theta)=\bmat{\frac{\theta^{n-1}}{(n-1)!}\hspace{-2mm}& \cdots\hspace{-2mm}& \theta \hspace{-2mm}&1}^T.
\end{equation*}}
Then for any $\mbf u \in X$ and $\mbf x \in L_2$, we have $D^n \mcl T \mbf x=\mbf x$, $\mcl T D^n \mbf u=\mbf u$ and $\mcl T \mbf x \in X$.
\end{lem}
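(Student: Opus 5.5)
The plan is to treat this as a boundary value problem $D^n \mbf u=\mbf x$ with the $n$ boundary conditions of $X$, and to verify the Green's function formula directly using the state-space structure of $D^n$. I will use the standard companion form: $\mbf v:=[g,Dg,\dots,D^{n-1}g]^T$ satisfies $\dot{\mbf v}=J\mbf v+\mbf e_{n}x$, where $J$ is the nilpotent upper shift and $\mbf e_n$ is the last unit vector. Since $e^{Jt}=W(t)$, variation of constants gives
\[
\mbf v(s)=W(s-a)\mbf v(a)+\int_a^s W(s-\theta)\mbf e\, \mbf x(\theta)d\theta ,
\]
where $\mbf e=[0,\dots,0,1]^T$. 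The first row of $W(s-\theta)\mbf e$ is $\frac{(s-\theta)^{n-1}}{(n-1)!}$.

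\textbf{Step 1 (solve for the initial data).} The boundary conditions read $N_a\mbf v(a)+N_b\mbf v(b)=0$. Substituting the formula for $\mbf v(b)$ gives
\[
(N_a+N_bW(b-a))\mbf v(a)=-N_b\int_a^b W(b-\theta)\mbf e\,\mbf x(\theta)d\theta .
\]
The invertibility hypothesis then determines $\mbf v(a)$ uniquely.

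\textbf{Step 2 (rewrite in the stated form).} I will use the semigroup identity $W(s-\theta)=W(s-a)W(a-\theta)$ and $W(b-\theta)=W(b-a)W(a-\theta)$. I will also check that $W(a-\theta)\mbf e=\mbf e_n(a-\theta)$, the last column of $W$, and that the first row of $W(s-a)$ is $\mbf e_1(s-a)^T$. The first component of $\mbf v(s)$ then becomes
\[
\mbf e_1(s-a)^T\Bigl[-P\int_a^b \mbf e_n(a-\theta)\mbf x\,d\theta+\int_a^s \mbf e_n(a-\theta)\mbf x\,d\theta\Bigr].
\]
Splitting the first integral at $s$ gives exactly $G$: the kernel is $(I-P)$ for $\theta<s$ and $-P$ for $s<\theta$.

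\textbf{Step 3 (the three claims).}
\begin{itemize}
\item Since $\mcl T\mbf x$ is the first component of an absolutely continuous solution of the companion system with $\mbf x\in L_2$, it lies in $H^n$ and satisfies $D^n\mcl T\mbf x=\mbf x$ a.e.
\item By construction of $\mbf v(a)$ it satisfies the boundary conditions, so $\mcl T\mbf x\in X$.
\item For $\mcl T D^n\mbf u=\mbf u$ with $\mbf u\in X$, I use uniqueness. Set $\mbf w=\mcl T D^n\mbf u-\mbf u\in X$; then $D^n\mbf w=0$. Hence its companion vector satisfies $\mbf v_w(s)=W(s-a)\mbf v_w(a)$, and the boundary conditions give $(N_a+N_bW(b-a))\mbf v_w(a)=0$. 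Therefore $\mbf v_w(a)=0$ and $\mbf w=0$.
\end{itemize}

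The main obstacle I expect is bookkeeping rather than substance. One point is matching the paper's conventions: $\mbf e_n(a-\theta)$ as the last column of $W(a-\theta)$, with entries $\frac{(a-\theta)^{n-k}}{(n-k)!}$, and the first row of $W(s-a)$ as $\mbf e_1(s-a)^T$. The other is being careful that $W(t)^{-1}=W(-t)$, so that the factorization through the point $a$ is legitimate. I would verify these identities entrywise with the binomial theorem, $\sum_k \frac{(s-a)^k}{k!}\frac{(a-\theta)^{m-k}}{(m-k)!}=\frac{(s-\theta)^m}{m!}$.
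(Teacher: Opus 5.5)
Your proof is correct, and it is the standard argument behind the Gohberg et al.\ result that the paper cites without reproducing a proof. That argument reduces $D^n$ to the companion system with fundamental matrix $W(s-a)=e^{J(s-a)}$, solves $N_a\mbf v(a)+N_b\mbf v(b)=0$ for $\mbf v(a)$ using invertibility of $N_a+N_bW(b-a)$, and reads off $G$ as the $(1,n)$ entry of the matrix Green's function, namely $W(s-a)(I-P)W(a-\theta)$ for $\theta<s$ and $-W(s-a)PW(a-\theta)$ for $s<\theta$; your Step 2 identities and your uniqueness argument for $\mcl T D^n\mbf u=\mbf u$ (via $\mbf w=\mcl T D^n\mbf u-\mbf u\in X$ with $D^n\mbf w=0$) both check out.
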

We refer to $\mbf u$ as the PDE state and $\mbf x=D^\alpha \mbf u$ as the fundamental or PIE state. Stability notions and Lyapunov functions may then be defined in terms of the PIE state and, occasionally, interpreted in terms of the PDE state, $\mbf u=\mcl T\mbf x$. 

\section{Positivity, Boundedness, and Negativity}\label{sec:LF_bounds}
For each notion of stability which will be introduced in Sec.~\ref{sec:stability_notions}, we will associate a Lyapunov stability condition. Each of these Lyapunov conditions includes three parts: a lower bound on the function (positivity), an upper bound on the function, and a bound on the negativity of the derivative of the function. Before introducing notions of stability, we characterize variations on these 3 properties.
Specifically, we list possible properties of a Lyapunov function, $V$, in terms of PIE state, $\mbf x=D^{\alpha}\mbf u$ where the PDE state is obtained from the PIE state as $\mbf u=\mcl T\mbf x$ for some PI operator, $\mcl T$. This then implies that $\norm{\mbf u}_{L_2}\le \norm{\mcl T}\norm{\mbf x}_{L_2}$. We start with positivity properties, from weakest to strongest.
%


\begin{defn}[Lyapunov positivity]\label{def:Lyap_positive}
For a given operator $\mcl T:L_2 \rightarrow X$ and Lyapunov function, $V:L_2\rightarrow \R^+$ with $V(0)=0$, we say that $V$ is
\begin{enumerate}
\item \textbf{Positive semidefinite} if $V(\mbf x)\ge 0$ .
\item \textbf{PDE positive} if $V(\mbf x)\ge \epsilon \norm{\mcl T\mbf x}^2$ for some $\epsilon>0$.
\item \textbf{PIE positive} if  $V(\mbf x)\ge \epsilon \norm{\mbf x}^2$ for some $\epsilon>0$.
\end{enumerate}
These conditions are required to hold for all $\mbf x \in L_2$.
\end{defn}
The positivity types in Defn.~\ref{def:Lyap_positive} are hierarchical\footnote{For an explanation why ``Positive definite'' was not included in this list, see Note~\ref{note:non_coercive} in~\cite{peet_2026LCSS_arxiv}} -- PIE positive implies PDE positive implies positive definite implies positive semidefinite. The first holds because  $V(\mbf x)\ge \epsilon \norm{\mbf x}^2$ and $\norm{\mcl T \mbf x}\le \norm{\mcl T}\norm{\mbf x}$ imply 
$
V(\mbf x)\ge \epsilon \norm{\mbf x}^2\ge \frac{\epsilon}{\norm{\mcl T^2}} \norm{\mcl T\mbf x}^2.
$ 
However, PDE positivity does not imply PIE positivity, since there is no $C$ such that $\norm{\mbf x}\le C \norm{\mcl T \mbf x}$. 

For upper bounds, from weak to strong, we have

\begin{defn}[Lyapunov bounds]\label{def:Lyap_bounded}
Given a Lyapunov function, $V:L_2\rightarrow \R^+$ with $V(0)=0$, we say that $V$ is
\begin{enumerate}
\item \textbf{PIE bounded} if $V(\mbf x)\le C \norm{\mbf x}^2$ for some $C>0$.
\item \textbf{PDE bounded} if  $V(\mbf x)\le C \norm{\mcl T\mbf x}^2$ for some $C>0$.
\end{enumerate}
Here the inequalities must hold for and all $\mbf x \in L_2$.
\end{defn}

Any Lyapunov function of the form $V(\mbf x)=\ip{\mbf x}{\mcl P \mbf x}$ with PI operator $\mcl P\in \Pi_3$ is PIE bounded. Furthermore, PDE bounded implies PIE bounded since $\norm{\mcl T \mbf x} \le \norm{\mcl T}\norm{\mbf x}$.


For negativity, we presume $V(\mbf x)$ is sufficiently regular so that, for a given $\mcl T \dot{\mbf x}=\mcl A \mbf x$, we may define $\dot V(\mbf x)$. 

\begin{defn}[Lyapunov Derivative Conditions]\label{def:Lyap_negative}
Given $\mcl T ,\mcl A \in \Pi_3$ and $V:L_2\rightarrow \R^+$, suppose there exists $\dot V$ such that $\dot V(\mbf x)=\lim_{h\rightarrow 0^+}\frac{V(\hat{\mbf x}(t+h))-V(\mbf x)}{h}$ for any $\hat{\mbf x}(t)$ such that $\mcl T \dot{\hat{\mbf x}}(t)=\mcl A \hat{\mbf x}(t)$ with $\hat{\mbf x}(0)=\mbf x$. Then we say that $\dot V$ is
\begin{enumerate}
\item \textbf{Negative semidefinite} if $\dot V(\mbf x)\le 0$
\item \textbf{PDE negative} if $\dot V(\mbf x)\le -\alpha \norm{\mcl T\mbf x}^2$
\item \textbf{PIE negative} if  $\dot V(\mbf x)\le - \alpha \norm{\mbf x}^2$
\item \textbf{Lyapunov Negative} if $\dot V(\mbf x)\le - \alpha V(\mbf x)$
\end{enumerate}
for some $\alpha>0$. Inequalities must hold for all $\mbf x \in L_2$.
\end{defn}

Apart from ``Lyapunov negative'', negativity conditions are ordered from weakest to strongest -- i.e.  PIE negative implies PDE negative implies negative semidefinite. The first holds since $-\norm{\mbf x}^2\le -\frac{1}{\norm{\mcl T}^2}\norm{\mcl T\mbf x}^2$. The strength of ``Lyapunov negativity'' relative to PIE and PDE negativity depends on positivity and boundedness of $V$. Specifically, Lyapunov negativity implies PDE or PIE negativity if $V$ is PDE or PIE positive, respectively. Likewise, PDE or PIE negativity implies Lyapunov negativity if $V$ is PDE or PIE, respectively.\vspace{-1mm}

\section{Stability Notions and Lyapunov Functions}\label{sec:stability_notions}\vspace{-1mm}
\blue{In this section, we define 4 notions of stability and, for each notion of stability, associate a sufficient condition in terms of existence of a Lyapunov function which satisfies a set of bounds as defined in Sec.~\ref{sec:LF_bounds}. For consistency, these notions of stability and Lyapunov conditions are all expressed in terms of a PIE $\mcl T \dot{\mbf x}=\mcl A \mbf x$ with fundamental state, $\mbf x \in L_2$.
In all cases, we presume that the PIE is obtained from a classically well-posed PDE using the definitions and formulae in~\cite{shivakumar_TAC2024} (or Lemma~\ref{lem:gohberg_polynomial}). This ensures that any solution of the PIE, $\mbf x(t)$ defines a solution of the PDE as $\mbf u(t)=\mcl T \mbf x(t)$ and hence when a stability condition involves $\mcl T \mbf x$, this is interpreted as the state of the underlying PDE from whence the PIE was obtained.}

\subsection{Notions of Lyapunov Stability}\vspace{-1mm}
Let us begin with the notions of Lyapunov stability. These notions are particularly important for energy-conserving systems such as wave and beam equations.
\begin{defn}\label{def:PIE2PDE_lyapunov}
We say $\mcl T \dot{\mbf x}=\mcl A \mbf x$ is
\begin{enumerate}
\item \textbf{Lyapunov PIE to PDE stable} if $\norm{\mcl T\mbf x(t)}\le C \norm{\mbf x(0)}$
\item \textbf{Lyapunov PIE stable} if $\norm{\mbf x(t)}\le C \norm{\mbf x(0)}$
\item \textbf{Lyapunov PDE stable} if $\norm{\mcl T\mbf x(t)}\le C \norm{\mcl T\mbf x(0)}$
\item \textbf{Lyapunov PDE to PIE stable} if $\norm{\mbf x(t)}\le C \norm{\mcl T \mbf x(0)}$ 
    \end{enumerate}
for some $C>0$ and any $\mbf x(t)$ with $\mcl T \dot{\mbf x}(t)=\mcl A {\mbf x}(t)$.
\end{defn}
Note that PDE to PIE stable never holds when the PIE state is a partial derivative of the PDE state ($\mbf x=D^{\alpha}\mbf u$) since at time $t=0$, this requires $\norm{\mbf x(0)}\le C \norm{\mcl T\mbf x(0)}$. However, such notions may hold for integro-differential equations~\cite{appell2000partial}.
%
In the following, we examine the relationships between the several proposed notions of Lyapunov stability.

\begin{lem}\label{lem:Lyapunov stability_relations} Given $\mcl T \dot{\mbf x}=\mcl A \mbf x$, the following are true.
\begin{enumerate}
\item PIE stable implies PIE to PDE stable
\item PDE stable implies PIE to PDE stable
\item PDE to PIE stable implies PIE stable, PDE stable and  PIE to PDE stable
\end{enumerate}
\end{lem}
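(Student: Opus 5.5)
The plan is to derive each implication by chaining the defining inequalities of Defn.~\ref{def:PIE2PDE_lyapunov} with the single operator bound $\norm{\mcl T\mbf y}\le \norm{\mcl T}\norm{\mbf y}$, valid for every $\mbf y\in L_2$ because $\mcl T\in\Pi_3\subset\mcl L(L_2)$. This bound may be applied at time $t$ (to $\mbf y=\mbf x(t)$) or at time $0$ (to $\mbf y=\mbf x(0)$). No dynamics are needed beyond the fact that $\mbf x(t)$ is a solution of $\mcl T\dot{\mbf x}=\mcl A\mbf x$. In each case the constant is simply rescaled.

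\textbf{Items 1 and 2.}
\begin{itemize}
\item For item 1, I will assume $\norm{\mbf x(t)}\le C\norm{\mbf x(0)}$ and apply the bound at time $t$, giving $\norm{\mcl T\mbf x(t)}\le \norm{\mcl T}\norm{\mbf x(t)}\le C\norm{\mcl T}\norm{\mbf x(0)}$. This is PIE to PDE stability with constant $C\norm{\mcl T}$.
\item For item 2, I will assume $\norm{\mcl T\mbf x(t)}\le C\norm{\mcl T\mbf x(0)}$ and apply the bound at time $0$, giving $\norm{\mcl T\mbf x(t)}\le C\norm{\mcl T}\norm{\mbf x(0)}$, the same conclusion.
\end{itemize}

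\textbf{Item 3.} I will assume $\norm{\mbf x(t)}\le C\norm{\mcl T\mbf x(0)}$.
\begin{itemize}
\item For PIE stability, the bound at time $0$ gives $\norm{\mbf x(t)}\le C\norm{\mcl T}\norm{\mbf x(0)}$.
\item For PDE stability, the bound at time $t$ gives $\norm{\mcl T\mbf x(t)}\le \norm{\mcl T}\norm{\mbf x(t)}\le C\norm{\mcl T}\norm{\mcl T\mbf x(0)}$.
\item PIE to PDE stability then follows from either of these via item 1 or item 2. Alternatively, applying the bound at both times gives constant $C\norm{\mcl T}^2$ directly.
\end{itemize}

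The only point that needs care is that $C\norm{\mcl T}>0$ is an admissible constant. Boundedness of $\mcl T$ is guaranteed since it is a PI operator with $L_2$ kernels (Lemma~\ref{lem:gohberg_polynomial}). If $\norm{\mcl T}=0$, the PDE-side inequalities hold trivially with any positive constant. There is no genuine obstacle here, since every step is a one-line norm estimate.
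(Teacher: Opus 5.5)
Your proposal is correct and follows the paper's proof essentially step for step: each implication comes from chaining the defining inequality with $\norm{\mcl T\mbf y}\le\norm{\mcl T}\norm{\mbf y}$ at time $t$ or at time $0$, and the PIE to PDE part of item 3 is obtained from items 1 or 2, exactly as in the paper. Your aside about $\norm{\mcl T}=0$ is unnecessary, since an inequality that holds with a nonnegative constant still holds after enlarging that constant to any positive value.
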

\begin{proof}
For statement 1), we have $\norm{\mbf x(t)}\le C \norm{\mbf x(0)}$ implies PIE to PDE stable since
$
\norm{\mcl T\mbf x(t)}\le \norm{\mcl T}\norm{\mbf x(t)}\le \norm{\mcl T} C \norm{\mbf x(0)}.$
For statement 2), we have $\norm{\mcl T\mbf x(t)}\le C \norm{\mcl T\mbf x(0)}$ implies PIE to PDE stable since $
\norm{\mcl T\mbf x(t)}\le C \norm{\mcl T\mbf x(0)}\le  \norm{\mcl T} C \norm{\mbf x(0)}.$ For statement 3), we have that $\norm{\mbf x(t)}\le C \norm{\mcl T\mbf x(0)}$ implies PDE stable since $\norm{\mcl T\blue{\mbf x}(t)}\le \norm{\mcl T}\norm{\mbf x(t)}\le \norm{\mcl T} C \norm{\mcl T\mbf x(0)}$.
Similarly, PIE stable follows since $\norm{\mbf x(t)}\le C \norm{\mcl T\mbf x(0)}\le  \norm{\mcl T} C \norm{\mbf x(0)}$. Statements 1) or 2) then imply PIE to PDE stable.
\end{proof}

Lemma~\ref{lem:Lyapunov stability_relations} shows that PIE to PDE stable is the weakest notion, since it is implied by any of the others. However, PDE stable does not imply PIE stable and PIE stable does not imply PDE stable.
\paragraph{Conditions for Lyapunov Stability}
Having defined several notions of Lyapunov stability, we now provide conditions for such stability properties in terms of properties of a candidate Lyapunov function.

\begin{lem}[Lyapunov stability]\label{lem:PIE2PDE_lyapunov}
  Suppose $V: L_2 \rightarrow \R^+$ with $V(0)=0$ and $\dot V$ as in Defn.~\ref{def:Lyap_negative}. Then $\mcl T \dot{\mbf x}=\mcl A \mbf x$ is
\begin{enumerate}
\item PIE to PDE stable if $V$ is PDE positive, PIE bounded, and $\dot V$ negative semidefinite.
\item PIE stable if $V$ is PIE positive, PIE bounded, and $\dot V$ negative semidefinite.
\item PDE stable if $V$ is PDE positive, PDE bounded, and $\dot V$ negative semidefinite.
\item PDE to PIE stable if $V$ is PIE positive, PDE bounded, and $\dot V$ negative semidefinite.
\end{enumerate}
\end{lem}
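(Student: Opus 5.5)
The plan is to run one sandwich argument that covers all four statements at once. Fix any solution $\mbf x(t)$ of $\mcl T\dot{\mbf x}=\mcl A\mbf x$. The first step is to show that $t\mapsto V(\mbf x(t))$ is non-increasing, so that $V(\mbf x(t))\le V(\mbf x(0))$ for all $t\ge 0$.

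By Defn.~\ref{def:Lyap_negative}, the upper right Dini derivative of $v(t):=V(\mbf x(t))$ equals $\dot V(\mbf x(t))$. We apply the definition at each time with $\hat{\mbf x}(\cdot)=\mbf x(t+\cdot)$, using time invariance of the PIE. Negative semidefiniteness then gives $D^+v(t)\le 0$ for all $t$. A standard comparison lemma for Dini derivatives says that a continuous function with nonpositive right derivative is non-increasing.

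This step is the main (mild) obstacle. It needs continuity of $v$, or at least that $V$ is regular enough along trajectories, which is implicit in the presumption that $\dot V$ exists. Alternatively one can argue by contradiction. Suppose $v(t_1)>v(t_0)$, and apply the Dini-derivative argument to $v(t)-\epsilon t$: its supremum over $[t_0,t_1]$ is attained at some point where the right derivative would be strictly positive, which is a contradiction. The conclusion then follows on letting $\epsilon\to 0$. I would simply invoke this classical fact, noting the regularity assumption.

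With monotonicity established, each item follows by chaining a lower bound at time $t$ with an upper bound at time $0$. For item 1, PDE positivity and PIE boundedness give
$\epsilon\norm{\mcl T\mbf x(t)}^2\le V(\mbf x(t))\le V(\mbf x(0))\le C\norm{\mbf x(0)}^2$. Hence $\norm{\mcl T\mbf x(t)}\le \sqrt{C/\epsilon}\,\norm{\mbf x(0)}$, which is PIE to PDE stability in Defn.~\ref{def:PIE2PDE_lyapunov}. Items 2--4 are identical, replacing the lower bound $\epsilon\norm{\mcl T\mbf x}^2$ by $\epsilon\norm{\mbf x}^2$ (PIE positive) and/or the upper bound $C\norm{\mbf x}^2$ by $C\norm{\mcl T\mbf x}^2$ (PDE bounded). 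In each case the constant is $\sqrt{C/\epsilon}$.

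As a consistency check, item 4 is compatible with the remark that PDE to PIE stability fails when $\mbf x=D^\alpha\mbf u$. In that case no $V$ can be simultaneously PIE positive and PDE bounded, since at $t=0$ this would force $\norm{\mbf x}\le\sqrt{C/\epsilon}\,\norm{\mcl T\mbf x}$. So the statement is vacuous rather than contradictory there, and no further care is needed.
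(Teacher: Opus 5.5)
Your proposal is correct and is essentially the paper's proof: monotonicity of $V$ along solutions, followed by the sandwich $\epsilon\norm{\cdot}^2\le V(\mbf x(t))\le V(\mbf x(0))\le C\norm{\cdot}^2$ in each of the four cases. The only difference is that the paper simply asserts that $\dot V\le 0$ gives $V(\mbf x(t))\le V(\mbf x(0))$, whereas you justify it with a Dini-derivative comparison lemma and rightly flag the continuity of $t\mapsto V(\mbf x(t))$ that this needs.

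One small correction to your optional contradiction sketch: a maximizer of $w(t):=v(t)-\epsilon t$ gives no contradiction, since the right derivative there is $\le 0$. Instead, choose $\epsilon>0$ small enough that $w(t_1)>w(t_0)$, and let $s$ be the last time in $[t_0,t_1]$ with $w(s)\le w(t_0)$. Then $s<t_1$ and the right difference quotients of $w$ at $s$ are positive, contradicting $D^+w(s)=\dot V(\mbf x(s))-\epsilon\le-\epsilon$. Simply invoking the classical lemma, as you propose, already suffices.
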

\begin{proof}
In all cases, $\dot V\le 0$ implies $V(\mbf x(t))\le V(\mbf x(0))$. Hence, for statement 1, we have\vspace{-2mm}
\[
\epsilon \norm{\mcl T\mbf x(t)}^2\le V(\mbf x(t))\le V(\mbf x(0))\le C \norm{\mbf x(0)}^2.\vspace{-1mm}
\]
For statement 2,\vspace{-2mm}
\[
\epsilon \norm{\mbf x(t)}^2\le V(\mbf x(t))\le V(\mbf x(0))\le C \norm{\mbf x(0)}^2.\vspace{-1mm}
\]
For Statement 3,\vspace{-2mm}
\[
\epsilon\norm{\mcl T\mbf x(t)}^2\le V(\mbf x(t))\le V(\mbf x(0))\le C \norm{\mcl T\mbf x(0)}^2.\vspace{-1mm}
\]
For statement 4,\vspace{-2mm}
\[
\epsilon \norm{\mbf x(t)}^2\le V(\mbf x(t))\le V(\mbf x(0))\le C \norm{\mcl T\mbf x(0)}^2.\vspace{-1mm}\vspace{-2mm}
\]
\end{proof}\vspace{-2mm}
\vspace{-2mm}
%
%
%
%

\subsection{Notions of Exponential Stability}\vspace{-1mm}
For exponential stability, from weak to strong, we have
\begin{defn}\label{def:PIE2PDE_exponential}
We say $\mcl T \dot{\mbf x}=\mcl A \mbf x$ is
\begin{enumerate}
\item \textbf{Exp. PIE to PDE stable} if $\norm{\mcl T\mbf x(t)}\le C e^{-\alpha t}\norm{\mbf x(0)}$
\item \textbf{Exp. PIE stable} if $\norm{\mbf x(t)}\le C e^{-\alpha t}\norm{\mbf x(0)}$
\item \textbf{Exp. PDE stable} if $\norm{\mcl T\mbf x(t)}\le C e^{-\alpha t}\norm{\mcl T\mbf x(0)}$
\item \textbf{Exp. PDE to PIE stable} if $\norm{\mbf x(t)}\le C e^{-\alpha t}\norm{\mcl T\mbf x(0)}$
    \end{enumerate}
for some $C,\alpha>0$ and any $\mbf x(t)$ with $\mcl T \dot{\mbf x}(t)=\mcl A {\mbf x}(t)$.
\end{defn}
As in Lemma~\ref{lem:Lyapunov stability_relations}, Exp. PIE or PDE stable implies Exp. PIE to PDE stable and Exp. PDE to PIE stable implies all others.

\noindent \textbf{Lyapunov Conditions for Exponential Stability}
To keep the conditions tight and concise, in Lemma~\ref{lem:PIE2PDE_exponential} we use Lyapunov negativity instead of PIE or PDE negativity. This allows us to directly use $V(\mbf x(t))\le -\alpha V(\mbf x(t))$ to conclude $V(\mbf x(t))\le e^{-\alpha t}V(\mbf x(0))$. We will return to PIE and PDE negativity in Subsec.~\ref{subsec:FE_stability} and Sec.~\ref{sec:LPI_conditions}, while for now recalling only that the relation to Lyapunov negativity depends on PIE and PDE positivity and boundedness. 

\begin{lem}[Conditions for Exponential stability]\label{lem:PIE2PDE_exponential}
  Suppose $V: L_2 \rightarrow \R^+$ with $V(0)=0$ and $\dot V$ as in Defn.~\ref{def:Lyap_negative} is Lyapunov negative. Then $\mcl T \dot{\mbf x}=\mcl A \mbf x$ is
\begin{enumerate}
\item Exp. PIE to PDE stable if $V$ is PDE positive, PIE bounded.
\item Exp. PIE stable if $V$ is PIE positive, PIE bounded.
\item Exp. PDE stable if $V$ is PDE positive, PDE bounded.
\item Exp. PDE to PIE stable if $V$ is PIE positive, PDE bounded.
\end{enumerate}
\end{lem}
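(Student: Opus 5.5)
The argument copies the proof of Lemma~\ref{lem:PIE2PDE_lyapunov}, with the Lyapunov negativity condition supplying an exponential decay factor. The first step is common to all four statements. Let $\mbf x(t)$ be any solution of $\mcl T\dot{\mbf x}=\mcl A\mbf x$ and set $v(t):=V(\mbf x(t))$. Definition~\ref{def:Lyap_negative} applies at each time $t$, because the shifted trajectory $\hat{\mbf x}(\tau)=\mbf x(t+\tau)$ is again a solution with $\hat{\mbf x}(0)=\mbf x(t)$. Hence the right upper Dini derivative of $v$ satisfies $D^+v(t)\le -\alpha v(t)$ for all $t\ge 0$.

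Next I would show $w(t):=e^{\alpha t}v(t)$ is nonincreasing. Its right derivative is $e^{\alpha t}(D^+v+\alpha v)\le 0$. The comparison lemma for Dini derivatives then gives
\[
V(\mbf x(t))\le e^{-\alpha t}V(\mbf x(0)).
\]
This step is the only real obstacle. Definition~\ref{def:Lyap_negative} gives only a one-sided derivative, so monotonicity needs a Dini-derivative comparison argument rather than the fundamental theorem of calculus. That argument requires $v$ to be continuous, and I would assume this as part of the standing regularity hypothesis under which $\dot V$ is defined (the paper presumes ``$V$ sufficiently regular''). The paper essentially takes this step for granted, so I would state it briefly with that assumption.

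The four cases then follow by sandwiching with the bounds of Definitions~\ref{def:Lyap_positive} and~\ref{def:Lyap_bounded}, exactly as in Lemma~\ref{lem:PIE2PDE_lyapunov}. For statement 1, PDE positivity and PIE boundedness give
\[
\epsilon\norm{\mcl T\mbf x(t)}^2\le V(\mbf x(t))\le e^{-\alpha t}V(\mbf x(0))\le Ce^{-\alpha t}\norm{\mbf x(0)}^2.
\]
Taking square roots yields $\norm{\mcl T\mbf x(t)}\le \sqrt{C/\epsilon}\,e^{-\alpha t/2}\norm{\mbf x(0)}$, which is Exp. PIE to PDE stability with constants $\sqrt{C/\epsilon}$ and $\alpha/2$.

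Statements 2, 3 and 4 are identical in form. Replace $\norm{\mcl T\mbf x(t)}$ on the left by $\norm{\mbf x(t)}$ when $V$ is PIE positive. Replace $\norm{\mbf x(0)}$ on the right by $\norm{\mcl T\mbf x(0)}$ when $V$ is PDE bounded. Each case gives decay rate $\alpha/2$ and constant $\sqrt{C/\epsilon}$, matching Definition~\ref{def:PIE2PDE_exponential}.
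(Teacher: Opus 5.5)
Your proposal is correct and follows the paper's proof. Both derive $V(\mbf x(t))\le e^{-\alpha t}V(\mbf x(0))$ from Lyapunov negativity and then bound it between the positivity and boundedness conditions in each of the four cases. The paper takes the integration step and the final square root for granted. Your Dini-derivative comparison (with continuity of $t\mapsto V(\mbf x(t))$ flagged as an assumption) and your explicit constants $\sqrt{C/\epsilon}$ and rate $\alpha/2$ add rigor rather than change the route.
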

\begin{proof}
For statement 1, we have\vspace{-2mm}
\[
\epsilon \norm{\mcl T\mbf x(t)}^2\le V(\mbf x(t))\le e^{-\alpha t} V(\mbf x(0))\le C e^{-\alpha t} \norm{\mbf x(0)}^2.\vspace{-1mm}
\]
For statement 2,\vspace{-2mm}
\[
\epsilon \norm{\mbf x(t)}^2\le V(\mbf x(t))\le e^{-\alpha t} V(\mbf x(0))\le C e^{-\alpha t} \norm{\mbf x(0)}^2.\vspace{-1mm}
\]
For statement 3,\vspace{-2mm}
\[
\epsilon \norm{\mcl T\mbf x(t)}^2\le V(\mbf x(t))\le e^{-\alpha t} V(\mbf x(0))\le C e^{-\alpha t} \norm{\mcl T\mbf x(0)}^2.\vspace{-1mm}
\]
For statement 4,\vspace{-2mm}
\[
\epsilon \norm{\mbf x(t)}^2\le V(\mbf x(t))\le e^{-\alpha t} V(\mbf x(0))\le C e^{-\alpha t} \norm{\mcl T\mbf x(0)}^2.\vspace{-1mm}
\]
\end{proof}\vspace{-2mm}
\vspace{-1mm}\vspace{-1mm}

\subsection{More Exotic Flavours: Finite Energy Stability}\label{subsec:FE_stability}\vspace{-1mm}
One might interpret ``non-coercive'' Lyapunov functions as $V$ positive definite, but not PDE positive. However, we don't have a consistent mechanism to enforce $V$ positive definite and so we will relax this to $V$ positive semidefinite. In order to ensure a non-zero Lyapunov function, we then enforce a strict negativity condition on the derivative. A consequence of this formulation is that the Lyapunov function does not enforce a pointwise-in-time decay rate in the same way that exponential stability does, but rather only imposes a bound on the energy of the solution as measured by its \textit{temporal} $L_2$ norm -- i.e. $\norm{\mbf x}_{L_2} = \sqrt{\int_0^\infty \norm{\mbf x(t)}^2 dt}$. We refer to this notion of energy as "Finite-Energy" Stability\footnote{See Note~\ref{note:asymptotic} in~\cite{peet_2026LCSS_arxiv} to relate asymptotic and finite-energy stability.}.  

\begin{defn}\label{def:PIE2PDE_FE_stable}
We say $\mcl T \dot{\mbf x}=\mcl A \mbf x$ is
\begin{enumerate}
\item \textbf{FE PIE to PDE stable } if $\norm{\mcl T\mbf x}_{L_2}\le C\norm{\mbf x(0)}$
\item \textbf{FE PIE stable} if $\norm{\mbf x}_{L_2}\le C\norm{\mbf x(0)}$
\item \textbf{FE PDE stable} if $\norm{\mcl T\mbf x}_{L_2}\le C\norm{\mcl T\mbf x(0)}$
\item \textbf{FE PDE to PIE stable} if $\norm{\mbf x}_{L_2}\le C\norm{\mcl T\mbf x(0)}$
    \end{enumerate}
    for some $C>0$ and any $\mbf x(t)$ which satisfies $\mcl T \dot{\mbf x}(t)=\mcl A {\mbf x}(t)$.
\end{defn}

%
%
%

\noindent \textbf{Lyapunov Conditions for FE stability} Unlike exponential stability, negativity of the derivative is critical in FE stability. Hence, we do not use Lyapunov negativity in Lemma~\ref{lem:FE_stability}.

\begin{lem}[Conditions for Finite Energy Stability]\label{lem:FE_stability}
  Suppose $V: L_2 \rightarrow \R^+$ with $V(0)=0$ is positive semidefinite and $\dot V$ is as in Defn.~\ref{def:Lyap_negative}. Then $\mcl T \dot{\mbf x}=\mcl A \mbf x$ is
  \begin{itemize}
\item FE PIE to PDE stable if $V$ is PIE bounded, $\dot V$ PDE negative.
\item FE PIE stable if $V$ is PIE bounded, $\dot V$ PIE negative.
\item FE PDE stable if $V$ is PDE bounded, $\dot V$ PDE negative.
\item FE PDE to PIE stable if $V$ is PDE bounded, $\dot V$ PIE negative.
  \end{itemize}
\end{lem}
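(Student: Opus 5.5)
The approach is to integrate the derivative bound in time and use $V\ge 0$ to discard the terminal value. This is the standard dissipation argument, and all four statements follow from it in the same way.

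Let $\mbf x(t)$ be any solution of $\mcl T\dot{\mbf x}=\mcl A\mbf x$ and fix $\tau>0$. Under each hypothesis on $\dot V$ we have $\dot V(\mbf x(t))\le -\alpha\norm{\mbf y(t)}^2$. Here $\mbf y=\mcl T\mbf x$ in the PDE negative case and $\mbf y=\mbf x$ in the PIE negative case. Since $\dot V$ is the right upper Dini derivative of $t\mapsto V(\mbf x(t))$ in the sense of Defn.~\ref{def:Lyap_negative}, I would integrate from $0$ to $\tau$. This gives
\[
V(\mbf x(\tau))+\alpha\int_0^\tau \norm{\mbf y(t)}^2dt\le V(\mbf x(0)).
\]
Positive semidefiniteness of $V$ gives $V(\mbf x(\tau))\ge 0$, so
\[
\alpha\int_0^\tau\norm{\mbf y(t)}^2dt\le V(\mbf x(0)).
\]
The right-hand side does not depend on $\tau$, and the integrand is nonnegative. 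Letting $\tau\to\infty$ by monotone convergence therefore yields $\norm{\mbf y}_{L_2}^2\le V(\mbf x(0))/\alpha$.

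Next I would apply the upper bound on $V$ at $t=0$. If $V$ is PIE bounded, then $V(\mbf x(0))\le C\norm{\mbf x(0)}^2$. If $V$ is PDE bounded, then $V(\mbf x(0))\le C\norm{\mcl T\mbf x(0)}^2$. Combining this with the previous step gives $\norm{\mbf y}_{L_2}\le \sqrt{C/\alpha}\,\norm{\mbf z(0)}$, with $\mbf z=\mbf x$ or $\mbf z=\mcl T\mbf x$ as appropriate.

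The four pairings of negativity type and boundedness type then produce exactly the four items of Defn.~\ref{def:PIE2PDE_FE_stable}:
\begin{itemize}
\item PIE bounded with PDE negative gives FE PIE to PDE stable;
\item PIE bounded with PIE negative gives FE PIE stable;
\item PDE bounded with PDE negative gives FE PDE stable;
\item PDE bounded with PIE negative gives FE PDE to PIE stable.
\end{itemize}

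The only delicate step is going from the pointwise derivative inequality to the integrated inequality. $V(\mbf x(t))$ is only assumed to have a right derivative along solutions, not to be absolutely continuous. I would handle this with the standard comparison lemma for Dini derivatives. Define
\[
W(t)=V(\mbf x(t))+\alpha\int_0^t\norm{\mbf y(s)}^2ds .
\]
Its right derivative is $\le 0$, and $W$ is continuous provided $V$ is continuous along solutions; this should be assumed as part of the regularity in Defn.~\ref{def:Lyap_negative}. Under those conditions $W$ is nonincreasing. This is the same monotonicity fact already used implicitly in the proofs of Lemmas~\ref{lem:PIE2PDE_lyapunov} and~\ref{lem:PIE2PDE_exponential}, so I would invoke it in the same way.
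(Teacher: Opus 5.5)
Your proof is correct and follows the paper's argument. You integrate the PDE- or PIE-negativity bound over $[0,T]$, discard $V(\mbf x(T))\ge 0$, let $T\to\infty$ to obtain $\alpha\norm{\mbf y}_{L_2}^2\le V(\mbf x(0))$, and then apply the PIE or PDE upper bound at $t=0$ for each of the four pairings. Your remark that passing from the one-sided derivative to the integrated inequality needs continuity of $t\mapsto V(\mbf x(t))$ (and of $\mbf x(t)$) is a regularity assumption the paper leaves implicit, so it refines the argument rather than departing from it.
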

\begin{proof}
In each case, we use $V(\mbf x)\ge 0$ and integrate the Lyapunov negativity condition. Specifically, for the PDE negativity property, we integrate $\dot V(\mbf x(t))\le -\alpha \norm{\mcl T \mbf x(t)}^2$ to show that for any $T$, $V(\mbf x(T))\ge 0$ and hence\vspace{-2mm}
\begin{align*}
&V(\mbf x(T))- V(\mbf x(0))\le -\alpha \int_0^T  \norm{\mcl T\mbf x(t)}^2 dt\qquad \\[-2mm]
&\Rightarrow \qquad \int_0^T  \norm{\mcl T \mbf x(t)}^2 dt \le \frac{1}{\alpha}V(\mbf x(0)).\vspace{-1mm}
\end{align*}
We may then take the limit as $T\rightarrow \infty$ to show that\vspace{-1mm}
\[
\alpha \norm{\mcl T \mbf x}_{L_2}^2 \le V(\mbf x(0)).\vspace{-1mm}
\]
For PIE negativity, we similarly have $\alpha \norm{\mbf x}_{L_2}^2 \le V(\mbf x(0))$. Now, statement 1) follows since 
$\alpha \norm{\mcl T\mbf x}_{L_2}^2 \le V(\mbf x(0))\le C \norm{\mbf x(0)}^2$. 

Statement 2) from $\alpha \norm{\mbf x}_{L_2}^2 \le V(\mbf x(0))\le C \norm{\mbf x(0)}^2$. 

Statement 3) from $\alpha \norm{\mcl T\mbf x}_{L_2}^2 \le V(\mbf x(0))\le C \norm{\mcl T\mbf x(0)}^2$. 

Statement 4) from  $\alpha \norm{\mbf x}_{L_2}^2 \le V(\mbf x(0))\le C \norm{\mcl T\mbf x(0)}^2$.
\end{proof}
%
%
\section{Sufficient Conditions for Stability}\label{sec:LPI_conditions}\vspace{-1mm}
In this section, we briefly provide a list of conditions for our stability notions in terms of feasibility of operator inequalities defined in terms of $\mcl T$ and $\mcl A$ and variable $\mcl P \in \Pi_3$ or $\mcl Q \in \Pi_3$. Here the variable label $\mcl P$ will be used when the variable is constrained to be self-adjoint and $\mcl Q$ otherwise. The operator inequality constraints are all interpreted as positivity on $L_2$ and may be verified by semidefinite programming algorithms in\footnote{Alternatively, these may be tested via discretization -- i.e. late lumping.} software such as PIETOOLS~\cite{PIETOOLS2025}.  


\begin{lem}\label{lem:LPI_bounding}
Given $\mcl T,\mcl P\in \Pi_3$, if $V(\mbf x)=\ip{\mbf x}{\mcl P \mbf x}$, then 
\begin{enumerate}
\item positive semidefinite is equivalent to  $\mcl P \ge 0$
\item PDE positivity is equivalent to  $\mcl P \ge \epsilon \mcl T^* \mcl T$
\item PIE positivity is equivalent to  $\mcl P \ge \epsilon I$
\item PDE bounded is equivalent to  $\mcl P \le C \mcl T^* \mcl T$
\item PIE bounded is equivalent to  $\mcl P \le C I$
\end{enumerate}
for some $\epsilon,C>0$. If $\dot V(\mbf x)=\ip{\mbf x}{\mcl D \mbf x}$ (per Defn.~\ref{def:Lyap_negative}), then
\begin{enumerate}
\item PDE negative is equivalent to $\mcl D \le -\alpha \mcl T^* \mcl T$
\item  PIE negative is equivalent to $\mcl D \le -\alpha I$
\item  Lyapunov negative is equivalent to $\mcl D \le -\alpha P$
\end{enumerate}
for some $\alpha>0$. See Note~\ref{note:lemma6_proof} in~\cite{peet_2026LCSS_arxiv} for a proof.
\end{lem}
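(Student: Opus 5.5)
The proof translates each property in Defns.~\ref{def:Lyap_positive}, \ref{def:Lyap_bounded} and \ref{def:Lyap_negative} into a quadratic-form inequality on $L_2$. We then identify that inequality with the stated operator inequality, using the convention that $\mcl M\ge \mcl N$ means $\ip{\mbf x}{(\mcl M-\mcl N)\mbf x}\ge 0$ for all $\mbf x\in L_2$. The one identity we need is $\norm{\mcl T\mbf x}^2=\ip{\mcl T\mbf x}{\mcl T\mbf x}=\ip{\mbf x}{\mcl T^*\mcl T\mbf x}$, valid because $\mcl T\in\Pi_3\subset\mcl L(L_2)$ has a bounded adjoint $\mcl T^*\in \Pi_3$ (the $*$-subalgebra property). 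Likewise $\norm{\mbf x}^2=\ip{\mbf x}{I\mbf x}$. We may take $\mcl P$ self-adjoint, since $V$ is real-valued and only the symmetric part $\half(\mcl P+\mcl P^*)$ enters $\ip{\mbf x}{\mcl P\mbf x}$. Otherwise each operator inequality is read as a statement about the symmetric part.

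For the five bounding statements, each equivalence is a direct rewriting.
\begin{itemize}
\item Positive semidefinite: $V(\mbf x)\ge 0$ for all $\mbf x$ is, by definition, $\mcl P\ge 0$.
\item PDE positive: $V(\mbf x)\ge\epsilon\norm{\mcl T\mbf x}^2$ for all $\mbf x$ reads $\ip{\mbf x}{(\mcl P-\epsilon\mcl T^*\mcl T)\mbf x}\ge 0$, i.e.\ $\mcl P\ge\epsilon\mcl T^*\mcl T$.
\item PIE positive: $V(\mbf x)\ge\epsilon\norm{\mbf x}^2$ reads $\ip{\mbf x}{(\mcl P-\epsilon I)\mbf x}\ge 0$, i.e.\ $\mcl P\ge \epsilon I$.
\item PDE bounded: $V(\mbf x)\le C\norm{\mcl T\mbf x}^2$ reads $\ip{\mbf x}{(C\mcl T^*\mcl T-\mcl P)\mbf x}\ge 0$, i.e.\ $\mcl P\le C\mcl T^*\mcl T$.
\item PIE bounded: $V(\mbf x)\le C\norm{\mbf x}^2$ reads $\mcl P\le CI$.
\end{itemize}
Each step is reversible, and the same constant $\epsilon$ or $C$ works in both directions, so the existential quantifiers match.

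The derivative statements are handled the same way with $\dot V(\mbf x)=\ip{\mbf x}{\mcl D\mbf x}$.
\begin{itemize}
\item PDE negative: $\dot V\le -\alpha\norm{\mcl T\mbf x}^2$ is equivalent to $\mcl D\le -\alpha\mcl T^*\mcl T$.
\item PIE negative: $\dot V\le -\alpha\norm{\mbf x}^2$ is equivalent to $\mcl D\le -\alpha I$.
\item Lyapunov negative: $\dot V\le -\alpha V$ reads $\ip{\mbf x}{\mcl D\mbf x}\le -\alpha\ip{\mbf x}{\mcl P\mbf x}$, which is $\mcl D\le -\alpha\mcl P$. The $P$ in the statement is read as $\mcl P$.
\end{itemize}

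The main point needing care is the meaning of ``for all $\mbf x\in L_2$'' in the derivative conditions. Defn.~\ref{def:Lyap_negative} presumes $\dot V$ is defined at every $\mbf x\in L_2$, that is, along a PIE solution starting from any fundamental state. The operator inequality is likewise positivity on all of $L_2$. So the two quantifications coincide, and no density argument is needed once the representation $\dot V(\mbf x)=\ip{\mbf x}{\mcl D\mbf x}$ is given as a hypothesis.

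For context, with $\mcl T\dot{\mbf x}=\mcl A\mbf x$ one would formally expect $\mcl D=\mcl A^*\mcl P\mcl T+\mcl T^*\mcl P\mcl A$ for suitable $V$ (e.g.\ $V(\mbf x)=\ip{\mcl T\mbf x}{\mcl P\mcl T\mbf x}$). The lemma does not require computing $\mcl D$, so no such derivation is part of this proof.
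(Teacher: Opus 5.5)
Your proposal is correct and follows essentially the same route as the paper's proof: each property is rewritten, via $\norm{\mcl T\mbf x}^2=\ip{\mbf x}{\mcl T^*\mcl T\mbf x}$ and $\norm{\mbf x}^2=\ip{\mbf x}{\mbf x}$, as the quadratic-form inequality $\ip{\mbf x}{(\cdot)\mbf x}\ge 0$ for all $\mbf x\in L_2$, which is by definition the stated operator inequality. You also write out the Lyapunov-negative case $\mcl D\le -\alpha\mcl P$, which the paper's proof leaves implicit but which follows by the same one-line rewriting.
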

Note that PIE bounded in satisfied for any $\mcl P \in \Pi_3$.

Every sufficient condition proposed in this section relies on one of the following two canonical candidate Lyapunov functions. Specifically, given $\mcl T$, we have \vspace{-2mm}
\[V_1(\mbf x):=\ip{\mcl T\mbf x}{ \mcl P \mcl T \mbf x}\quad \text{and}\quad  V_2(\mbf x):=\ip{\mbf x}{\mcl Q \mcl T \mbf x}\vspace{-2mm}
\]
where $V_1$ and $V_2$ are parameterized by PI operators $\mcl P\in \Pi_3$ and $\mcl Q\in \Pi_3$, respectively. Here we constrain $\mcl P=\mcl P^*$ and $\mcl Q \mcl T=\mcl T^*\mcl Q^*$. See Notes~\ref{note:V1} and~\ref{note:V2} in~\cite{peet_2026LCSS_arxiv} for further justification of $V_1$ and $V_2$, respectively.\vspace{-1mm}

\subsection{Operator Inequalities for Lyapunov Stability}\vspace{-1mm}
In the following, we combine $V_1$ and $V_2$ with Lemmas~\ref{lem:PIE2PDE_lyapunov} and~\ref{lem:LPI_bounding} to obtain operator inequalities for each category of Lyapunov stability -- See Note~\ref{note:lemma6_proof} in~\cite{peet_2026LCSS_arxiv} for a proof.

\begin{lem}\label{lem:PIE2PDE_lyapunov_LPI}
Given $\mcl T,\mcl A$, the PIE $\mcl T \dot{\mbf x}=\mcl A \mbf x$ is
\begin{enumerate}
\item PIE to PDE stable if $\mcl Q \mcl T=(\mcl Q \mcl T)^*\ge \epsilon \mcl T^* \mcl T$ and $\mcl Q\mcl A+\mcl A^*\mcl Q^*\le 0$.
\item PIE stable if $\mcl Q \mcl T=(\mcl Q \mcl T)^* \ge \epsilon I$  and $\mcl Q\mcl A+\mcl A^*\mcl Q^*\le 0$.
\item PDE stable if $\mcl P \ge \epsilon I$, $\mcl T^* \mcl P \mcl A+\mcl A^* \mcl P \mcl T\le 0$.
\item PDE to PIE stable if $\mcl Q \mcl T=(\mcl Q \mcl T)^* \ge \epsilon I$, $\mcl Q \mcl T\le C \mcl T^* \mcl T$, and $\mcl Q\mcl A+\mcl A^*\mcl Q^*\le 0$.
\end{enumerate}
for some $\mcl Q,\mcl P \in \Pi_3$, $\epsilon,C>0$. See Note~\ref{note:lemma7_proof} in~\cite{peet_2026LCSS_arxiv} for proof.
\end{lem}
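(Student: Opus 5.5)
The plan is to reduce each statement to Lemma~\ref{lem:PIE2PDE_lyapunov}, using Lemma~\ref{lem:LPI_bounding} to translate the operator inequalities into the positivity, boundedness and negativity properties of Sec.~\ref{sec:LF_bounds}. For statements 1), 2) and 4) the candidate is $V_2(\mbf x)=\ip{\mbf x}{\mcl Q\mcl T\mbf x}$. For statement 3) it is $V_1(\mbf x)=\ip{\mcl T\mbf x}{\mcl P\mcl T\mbf x}$.

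\textbf{Computing $\dot V$.} For $V_2$, take a solution $\mcl T\dot{\mbf x}=\mcl A\mbf x$. The constraint $\mcl Q\mcl T=(\mcl Q\mcl T)^*$ lets us differentiate the quadratic form symmetrically:
\[
\dot V_2=\ip{\dot{\mbf x}}{\mcl Q\mcl T\mbf x}+\ip{\mbf x}{\mcl Q\mcl T\dot{\mbf x}}
=\ip{\mcl Q^*\mbf x}{\mcl T\dot{\mbf x}}+\ip{\mbf x}{\mcl Q\mcl T\dot{\mbf x}}.
\]
Here the first term was rewritten using $\mcl T^*\mcl Q^*=\mcl Q\mcl T$. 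Substituting $\mcl T\dot{\mbf x}=\mcl A\mbf x$ gives $\dot V_2=\ip{\mbf x}{(\mcl Q\mcl A+\mcl A^*\mcl Q^*)\mbf x}$. Hence $\mcl D=\mcl Q\mcl A+\mcl A^*\mcl Q^*\le 0$ is exactly negative semidefiniteness of $\dot V_2$.

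For $V_1$, the same computation with $\mcl P=\mcl P^*$ gives $\dot V_1=\ip{\mbf x}{(\mcl A^*\mcl P\mcl T+\mcl T^*\mcl P\mcl A)\mbf x}$. This is $\le 0$ by hypothesis.

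\textbf{Bounds for $V_2$.} Since $V_2(\mbf x)=\ip{\mbf x}{(\mcl Q\mcl T)\mbf x}$ with $\mcl Q\mcl T\in\Pi_3$ self-adjoint, Lemma~\ref{lem:LPI_bounding} applies with $\mcl Q\mcl T$ in place of $\mcl P$. Then:
\begin{itemize}
\item $\mcl Q\mcl T\ge\epsilon\mcl T^*\mcl T$ gives PDE positivity, used in statement 1).
\item $\mcl Q\mcl T\ge\epsilon I$ gives PIE positivity, used in statements 2) and 4).
\item PIE boundedness is automatic, since $\mcl Q\mcl T$ is bounded. This settles statements 1) and 2).
\item $\mcl Q\mcl T\le C\mcl T^*\mcl T$ gives PDE boundedness, used in statement 4).
\end{itemize}
Lemma~\ref{lem:PIE2PDE_lyapunov} items 1), 2) and 4) then yield the conclusions.

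\textbf{Bounds for $V_1$.} Here $V_1(\mbf x)=\ip{\mbf x}{\mcl T^*\mcl P\mcl T\mbf x}$, and:
\begin{itemize}
\item $\mcl P\ge\epsilon I$ gives $\mcl T^*\mcl P\mcl T\ge\epsilon\mcl T^*\mcl T$, i.e.\ PDE positivity.
\item $\mcl P\le\norm{\mcl P}I$ gives $\mcl T^*\mcl P\mcl T\le\norm{\mcl P}\mcl T^*\mcl T$, i.e.\ PDE boundedness with $C=\norm{\mcl P}$.
\end{itemize}
Item 3) of Lemma~\ref{lem:PIE2PDE_lyapunov} then gives PDE stability.

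\textbf{Main obstacle.} I expect the hardest step to be justifying the derivative computation rigorously. One must ensure the limit in Defn.~\ref{def:Lyap_negative} exists and equals the stated quadratic form. This should rely on the assumption, made in the footnote, that $\partial_t(\mcl T\mbf x)=\mcl T\dot{\mbf x}$ and that classical solutions are differentiable in $L_2$. The self-adjointness constraint $\mcl Q\mcl T=\mcl T^*\mcl Q^*$ is precisely what removes any need to differentiate $\mbf x$ without the factor $\mcl T$ in front.
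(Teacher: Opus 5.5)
Your proposal is correct and matches the paper's proof in Note~\ref{note:lemma7_proof}: $V_2(\mbf x)=\ip{\mbf x}{\mcl Q\mcl T\mbf x}$ for statements 1), 2), 4) and $V_1(\mbf x)=\ip{\mcl T\mbf x}{\mcl P\mcl T\mbf x}$ for statement 3), with the bounds obtained from Lemma~\ref{lem:LPI_bounding}, the derivative computed using self-adjointness of $\mcl Q\mcl T$ (resp.\ $\mcl P$) and substitution of $\mcl T\dot{\mbf x}=\mcl A\mbf x$, and the conclusion drawn from Lemma~\ref{lem:PIE2PDE_lyapunov}. The paper also takes the regularity of $\dot V$, which you flag as the main obstacle, for granted, so this is not a gap relative to its argument.
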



\begin{example}[PIE to PDE Stability of the Wave Equation]\label{ex:wave_stable}
Let us now reconsider the wave equation which we recall is not PDE stable. Specifically, the wave equation $\ddot{\mbf u}=\mbf u_{ss}$ with $\mbf u(t,0)=\mbf u(t,1)=0$ admits a PIE representation of the form\vspace{-2mm}
\begin{equation}
\overbrace{\bmat{I&0\\0&\mcl T_0}}^{\mcl T}\bmat{\dot{\mbf x}_1\\ \dot{\mbf x}_2}=\overbrace{\bmat{0&I\\ I& 0}}^{\mcl A}\bmat{{\mbf x_1}\\ {\mbf x_2}}.
\end{equation}
where $\mbf x_1:=\dot{\mbf u}$ and $\mbf x_2:=\mbf{u}_{ss}$ and where\vspace{-2mm}
\[
(\mcl T_0\mbf x)(s):=\int_0^1 G_T(s,\theta)\mbf x(\theta)d\theta
\quad G_T(s,\theta)=
\begin{cases}
-\theta&\theta < s\\
-s& s < \theta.
\end{cases}\vspace{-2mm}
\]
To apply the stability conditions of Lemma~\ref{lem:PIE2PDE_lyapunov}, we observe that $\mcl T_0=-\mcl R^* \mcl R$ for\vspace{-2mm}
\[
(\mcl R\mbf x)(s):=\int_0^1 G_R(s,\theta)\mbf x(\theta)d\theta
\quad G_R(s,\theta)=
\begin{cases}
0&\theta < s\\
-1& s < \theta.
\end{cases}\vspace{-2mm}
\]
Thus if we choose $\mcl Q={\tiny \bmat{I &0\\0&-I}}$, we find that \vspace{-1mm}
\begin{equation}
\mcl T_0^* \mcl T_0=\mcl R^* \mcl R\mcl R^* \mcl R\le \norm{\mcl R \mcl R^*}\mcl R^* \mcl R=-\norm{\mcl T_0}\mcl T_0\vspace{-2mm}
\end{equation}
and\vspace{-2mm}
\begin{align}
\mcl Q \mcl T&=\mcl T^* \mcl Q^*=\bmat{I&0\\0&-\mcl T_0}\\
&\ge \min\left\{1,\frac{1}{\norm{\mcl T_0}}\right\}\bmat{I&0\\0&\mcl T_0^*\mcl T_0}=\min\left\{1,\frac{1}{\norm{\mcl T_0}}\right\}\mcl T^* \mcl T.\notag
\end{align}
Furthermore,\vspace{-2mm}
\[
\mcl Q\mcl A+\mcl A^*\mcl Q^*=\bmat{I &0\\0&-I}\bmat{0&I\\ I& 0}+\bmat{0&I\\ I& 0}\bmat{I &0\\0&-I}=0.\vspace{-2mm}
\]
Hence Lemma~\ref{lem:PIE2PDE_lyapunov_LPI} implies PIE to PDE stability.\vspace{-2mm}
\end{example}

\subsection{Operator Inequalities for Exponential Stability}\vspace{-1mm}
We combine $V_1$ and $V_2$ with Lemmas~\ref{lem:PIE2PDE_exponential} and~\ref{lem:LPI_bounding} to obtain the following operator inequalities for each category of exponential stability\footnote{If we wish to avoid bilinearity in decision variables $\alpha$ and $\mcl Q$ or $\mcl P$, we may replace the Lyapunov negativity conditions in Lem.~\ref{lem:PIE2PDE_exponential} with PIE or PDE negativity -- See Note~\ref{note:exp_LPI_alt}.}.

\begin{lem}[LPIs for Exp. Stability]\label{lem:exp_stable_LPIs}
Given $\mcl T,\mcl A$, the PIE $\mcl T \dot{\mbf x}=\mcl A \mbf x$ is
\begin{enumerate}
\item Exp. PIE to PDE stable if $\mcl Q \mcl T=(\mcl Q \mcl T)^* \ge \epsilon \mcl T^* \mcl T$, $\mcl Q\mcl A+\mcl A^*\mcl Q^*\le -\alpha \mcl Q \mcl T$
\item Exp. PIE stable if $\mcl Q \mcl T=(\mcl Q \mcl T)^* \ge \epsilon I$  and $\mcl Q\mcl A+\mcl A^*\mcl Q^*\le -\alpha \mcl Q \mcl T$
\item Exp. PDE stable if $\mcl P \ge \epsilon I$, $\mcl T^* \mcl P \mcl A+\mcl A^* \mcl P \mcl T\le -\alpha\mcl T^* \mcl P \mcl T$
\item Exp. PDE to PIE stable if $\mcl Q \mcl T=(\mcl Q \mcl T)^* \ge \epsilon I$, $\mcl Q \mcl T\le C \mcl T^* \mcl T$, and $\mcl Q\mcl A+\mcl A^*\mcl Q^*\le -\alpha \mcl Q \mcl T$
  \end{enumerate}
  for some $\mcl Q,\mcl P \in \Pi_3$ and $\epsilon,C,\alpha>0$. See Note~\ref{note:lemma8_proof} in~\cite{peet_2026LCSS_arxiv} for proof.
\end{lem}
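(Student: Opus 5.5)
The plan is to reduce each item to Lemma~\ref{lem:PIE2PDE_exponential} by choosing the matching canonical Lyapunov function. For items 1), 2) and 4) we take $V_2(\mbf x)=\ip{\mbf x}{\mcl Q\mcl T\mbf x}$; for item 3) we take $V_1(\mbf x)=\ip{\mcl T\mbf x}{\mcl P\mcl T\mbf x}$. In each case we translate the operator inequalities into the positivity, boundedness and Lyapunov-negativity properties via Lemma~\ref{lem:LPI_bounding}. We then read off the corresponding exponential stability notion.

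\textbf{Step 1: compute the derivatives.} Let $\mbf x(t)$ satisfy $\mcl T\dot{\mbf x}=\mcl A\mbf x$. Using the constraint $\mcl Q\mcl T=(\mcl Q\mcl T)^*=\mcl T^*\mcl Q^*$, we get
\[
\dot V_2=\ip{\dot{\mbf x}}{\mcl Q\mcl T\mbf x}+\ip{\mbf x}{\mcl Q\mcl T\dot{\mbf x}}=\ip{\mcl T\dot{\mbf x}}{\mcl Q^*\mbf x}+\ip{\mbf x}{\mcl Q\mcl A\mbf x}=\ip{\mbf x}{(\mcl Q\mcl A+\mcl A^*\mcl Q^*)\mbf x}.
\]
This is the key point: only $\mcl T\dot{\mbf x}$ appears, so $\dot{\mbf x}$ can be replaced by the dynamics without inverting $\mcl T$. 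Similarly, $\dot V_1=\ip{\mcl T\dot{\mbf x}}{\mcl P\mcl T\mbf x}+\ip{\mcl T\mbf x}{\mcl P\mcl T\dot{\mbf x}}=\ip{\mbf x}{(\mcl T^*\mcl P\mcl A+\mcl A^*\mcl P\mcl T)\mbf x}$, using $\mcl P=\mcl P^*$. Both derivatives therefore have the form $\ip{\mbf x}{\mcl D\mbf x}$ required by Lemma~\ref{lem:LPI_bounding}.

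\textbf{Step 2: match the hypotheses.} For $V_2$, the inequality $\mcl Q\mcl A+\mcl A^*\mcl Q^*\le-\alpha\mcl Q\mcl T$ gives $\dot V_2\le-\alpha V_2$, which is Lyapunov negativity. The remaining hypotheses translate as follows.
\begin{itemize}
\item In item 1), $\mcl Q\mcl T\ge\epsilon\mcl T^*\mcl T$ gives PDE positivity.
\item In items 2) and 4), $\mcl Q\mcl T\ge\epsilon I$ gives PIE positivity.
\item Boundedness: $\mcl Q\mcl T\in\Pi_3$ is a bounded operator, so $V_2$ is always PIE bounded.
\item In item 4), $\mcl Q\mcl T\le C\mcl T^*\mcl T$ additionally gives PDE boundedness.
\end{itemize}
Statements 1), 2) and 4) of Lemma~\ref{lem:PIE2PDE_exponential} then yield items 1), 2) and 4).

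For $V_1$ (item 3)), $\mcl P\ge\epsilon I$ gives $V_1\ge\epsilon\norm{\mcl T\mbf x}^2$, which is PDE positivity. Boundedness of $\mcl P$ gives $V_1\le\norm{\mcl P}\norm{\mcl T\mbf x}^2$, which is PDE boundedness. The derivative inequality is exactly $\dot V_1\le-\alpha V_1$. Statement 3) of Lemma~\ref{lem:PIE2PDE_exponential} then applies.

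\textbf{Main obstacle.} The delicate step is the derivative computation in Step 1. It needs the symmetry of $\mcl Q\mcl T$ to move $\mcl T$ onto $\dot{\mbf x}$, and enough regularity for $\dot V$ to exist as in Defn.~\ref{def:Lyap_negative}. We would assume this regularity, since the PIE comes from a classically well-posed PDE with $\mbf x(t)$ differentiable. A secondary point is that $V_2$ must map into $\R^+$ with $V_2(0)=0$; this follows from the positivity hypothesis in each case.
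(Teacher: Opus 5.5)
Your proposal is correct and follows the paper's proof in Note~\ref{note:lemma8_proof}. It uses $V_2=\ip{\mbf x}{\mcl Q\mcl T\mbf x}$ for items 1), 2), 4) and $V_1=\ip{\mbf x}{\mcl T^*\mcl P\mcl T\mbf x}$ for item 3), translates the operator inequalities into positivity, boundedness and Lyapunov negativity via Lemma~\ref{lem:LPI_bounding}, and concludes with Lemma~\ref{lem:PIE2PDE_exponential}. In item 4) you rightly attribute PIE positivity to $\mcl Q\mcl T\ge\epsilon I$; the paper's note cites ``statement 1)'' there, which is a slip.
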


\begin{example}[The Heat Equation is Exp. PDE stable]
The heat equation $\mbf u_t=\mbf u_{ss}+\lambda \mbf u$ with Dirichlet boundary conditions admits a PIE representation $\mcl T \dot{\mbf x}=\mcl A \mbf x=(I+\lambda \mcl T)\mbf x$ where \vspace{-1mm}
\[
(\mcl T\mbf x)(s)=\int_0^s  \theta (s-1)\mbf x(\theta)  d\theta + \int_s^1  s (\theta-1)\mbf x(\theta)  d\theta\vspace{-1mm}
\]
and $\mcl T=\mcl T^*=-\mcl M^*\mcl M$ where \vspace{-1mm}
\[
(\mcl M\mbf x)(s)=\int_0^s  \theta \mbf x(\theta)  d\theta + \int_s^1  (\theta-1)\mbf x(\theta)  d\theta.\vspace{-1mm}
\]
Now, $-\mcl M^*\mcl M\le -\pi^2 \mcl T^* \mcl T$. Hence for $\mcl P=I$, we have that\vspace{-1mm}
\[
\mcl T^* \mcl P \mcl A+\mcl A^* \mcl P \mcl T=-2(\mcl M^*\mcl M-\lambda \mcl T^* \mcl T)\le -2(\pi^2-\lambda)\mcl T^* \mcl T.\vspace{-1mm}
\]
Hence by Lemma~\ref{lem:exp_stable_LPIs} we have that the heat equation is exponentially PDE stable for any $\lambda<\pi^2$.\vspace{-0mm}

\end{example}
\subsection{Operator Inequalities  for Finite Energy Stability}\vspace{-1mm}
Finally, we combine $V_1$ and $V_2$ with Lemmas~\ref{lem:FE_stability} and~\ref{lem:LPI_bounding} to obtain the following operator inequalities for each category of finite energy stability.

\begin{lem}\label{lem:FE_stability_LPI}
Given $\mcl T,\mcl A$, the PIE $\mcl T \dot{\mbf x}=\mcl A \mbf x$ is
  \begin{enumerate}
\item FE PIE to PDE stable if $\mcl Q \mcl T=(\mcl Q \mcl T)^* \ge 0$, $\mcl Q\mcl A+\mcl A^*\mcl Q^*\le -\alpha \mcl T^* \mcl T$.
\item FE PIE stable if $\mcl Q \mcl T=(\mcl Q \mcl T)^* \ge 0$  and $\mcl Q\mcl A+\mcl A^*\mcl Q^*\le -\alpha I$.
\item FE PDE stable if $\mcl P \ge 0$, $\mcl T^* \mcl P \mcl A+\mcl A^* \mcl P \mcl T\le -\alpha \mcl T^* \mcl T$.
\item FE PDE to PIE stable if $\mcl Q \mcl T=(\mcl Q \mcl T)^* \ge 0$, $\mcl Q \mcl T\le C \mcl T^* \mcl T$, and $\mcl Q\mcl A+\mcl A^*\mcl Q^*\le -\alpha I$.
  \end{enumerate}
  for some $\mcl Q,\mcl P \in \Pi_3$ and $\epsilon,C,\alpha>0$. See Note~\ref{note:lemma10_proof} in~\cite{peet_2026LCSS_arxiv} for proof.
\end{lem}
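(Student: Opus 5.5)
The plan is to combine the canonical Lyapunov candidates $V_1$ and $V_2$ with Lemma~\ref{lem:FE_stability} (conditions for finite energy stability) and Lemma~\ref{lem:LPI_bounding} (operator characterizations of the bounds). For items 1), 2) and 4) we take $V_2(\mbf x)=\ip{\mbf x}{\mcl Q\mcl T\mbf x}$. For item 3) we take $V_1(\mbf x)=\ip{\mcl T\mbf x}{\mcl P\mcl T\mbf x}$. In each case we then verify positive semidefiniteness of $V$, the relevant upper bound, and the relevant negativity of $\dot V$.

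\textbf{Derivative computation.} This is the key step. For $V_2$ with $\mcl Q\mcl T=(\mcl Q\mcl T)^*$, we differentiate along a solution of $\mcl T\dot{\mbf x}=\mcl A\mbf x$ and use self-adjointness:
\[
\dot V_2=\ip{\dot{\mbf x}}{\mcl Q\mcl T\mbf x}+\ip{\mbf x}{\mcl Q\mcl T\dot{\mbf x}}=\ip{\mcl T\dot{\mbf x}}{\mcl Q^*\mbf x}+\ip{\mbf x}{\mcl Q\mcl T\dot{\mbf x}}.
\]
Here we used $(\mcl Q\mcl T)^*=\mcl T^*\mcl Q^*$. Substituting $\mcl T\dot{\mbf x}=\mcl A\mbf x$ gives
\[
\dot V_2=\ip{\mbf x}{(\mcl Q\mcl A+\mcl A^*\mcl Q^*)\mbf x},
\]
so $\mcl D=\mcl Q\mcl A+\mcl A^*\mcl Q^*$. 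For $V_1$, with $\mcl P=\mcl P^*$, the same computation gives $\mcl D=\mcl T^*\mcl P\mcl A+\mcl A^*\mcl P\mcl T$.

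\textbf{Justifying the derivative.} The main obstacle is making this differentiation rigorous in the sense of Defn.~\ref{def:Lyap_negative}, since $\dot{\mbf x}$ itself need not lie in $L_2$. We rely on the standing assumption of a classically well-posed PDE and the commutation convention in the earlier footnote. The argument for $V_2$ only ever uses the product $\mcl T\dot{\mbf x}=\partial_t(\mcl T\mbf x)$, which is $\mcl A\mbf x\in L_2$. We therefore write $V_2=\ip{\mbf x}{\mcl Q(\mcl T\mbf x)}$ and differentiate the factor $\mcl T\mbf x$. The symmetry of $\mcl Q\mcl T$ is what allows us to move $\mcl T$ onto the other factor.

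\textbf{Bounds.} By Lemma~\ref{lem:LPI_bounding}, $\mcl Q\mcl T\ge0$ (respectively $\mcl P\ge0$, which gives $\mcl T^*\mcl P\mcl T\ge 0$) means $V$ is positive semidefinite. Since $\mcl Q\mcl T$ and $\mcl T^*\mcl P\mcl T$ are bounded PI operators, $V$ is PIE bounded automatically, with $C=\norm{\mcl Q\mcl T}$. For $V_1$ we also have $V_1\le\norm{\mcl P}\norm{\mcl T\mbf x}^2$, so $V_1$ is PDE bounded without further conditions. In item 4), the extra constraint $\mcl Q\mcl T\le C\mcl T^*\mcl T$ gives PDE boundedness of $V_2$.

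\textbf{Negativity and conclusion.} By Lemma~\ref{lem:LPI_bounding}, $\mcl D\le-\alpha\mcl T^*\mcl T$ is PDE negativity and $\mcl D\le-\alpha I$ is PIE negativity. Matching each item's pair (boundedness, negativity) with the corresponding bullet of Lemma~\ref{lem:FE_stability} then yields items 1)--4) directly.
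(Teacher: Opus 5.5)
Your proposal is correct and follows essentially the same route as the paper's proof. The paper uses $V_2=\ip{\mbf x}{\mcl Q\mcl T\mbf x}$ for items 1), 2), 4) and $\ip{\mbf x}{\mcl T^*\mcl P\mcl T\mbf x}$ for item 3), reads off positivity, bounds and negativity via Lemma~\ref{lem:LPI_bounding}, and concludes with Lemma~\ref{lem:FE_stability}, as you do. In item 4) your explicit use of $\mcl Q\mcl T\le C\mcl T^*\mcl T$ for PDE boundedness is what Lemma~\ref{lem:FE_stability} actually requires (the paper's written proof only says ``PIE bounded'' there), and your remark that only the difference quotient of $\mcl T\mbf x$ is needed adds rigor the paper leaves to its regularity assumption.
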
\vspace{-1mm}
\section{Summary}\label{sec:summary}\vspace{-1mm}
Table~\ref{tab:summary} summarizes the weakest Lyapunov conditions required for each notion of stability.
\begin{table}[hb]\vspace{-4mm}
\begin{center}{\scriptsize
\begin{tabular}{c||c|c||c|c||c|c|c}
{\hspace{-2mm}\scriptsize  }\hspace{-2mm} &\multicolumn{2}{c}{Positivity} & \multicolumn{2}{c}{Upper Bound}& \multicolumn{3}{c}{Negativity} \\
{\hspace{-2mm}\scriptsize  Notion }\hspace{-2mm} 
                & PIE  & PDE & PDE & PIE & PIE & PDE& SD\\
\hline
PIE2PDE         &    & X &   & X &  &   & X\\
 PIE            & X  &   &   & X &  &   & X\\
 PDE            &    & X & X &   &  &   & X\\
 PDE2PIE        & X  &   & X &   &  &   & X\\
Exp PIE2PDE    &    & X &   & X & X &    & \\
Exp PIE2PDE    &    & X & X &   &   &  X & \\
Exp  PIE       & X  &   &   & X & X &    & \\
Exp  PIE       & X  &   &  X&   &   &  X  & \\
Exp  PDE       &    & X & X &   &   & X  &  \\
Exp  PDE2PIE   & X  &   & X &   &   & X  & \\
FE PIE2PDE     &    &   &   & X &   & X   &  \\
FE PIE         &    &   &   & X & X &    & \\
FE PDE         &    &   & X &   &   & X  & \\
FE PDE2PIE     &    &   & X &   & X &    & \\
\end{tabular}}
\end{center}\vspace{-2mm}
\caption{The weakest Lyapunov conditions which imply each given notion of stability (2 variants for PIE2PDE and PIE exp. stability). Categories of positivity, bound, and negativity are ordered from strongest to weakest. }\label{tab:summary}\end{table}\vspace{-2mm}\vspace{-2mm}

\section{PDEs and Stability Analysis in PIETOOLS}\label{sec:summary}\vspace{-1mm}
\blue{We consider 4 illustrative PDEs and test each notion of stability indicated in Table~\ref{tab:summary} using PIETOOLS to enforce the associated LPI constraints as indicated in Lemmas~\ref{lem:PIE2PDE_lyapunov_LPI},~\ref{lem:exp_stable_LPIs}, and~\ref{lem:FE_stability_LPI}. In all cases, the PDE, BCs, and definition of PIE state, $\mbf x$, are given. LPI parameters were chosen as $\alpha=\epsilon=.001$. Table~\ref{tab:results} lists each notion of stability PIETOOLS was able to verify for each example\footnote{Note that a lack of verification does not necessarily imply the system is not stable in the given sense.}.} 

\noindent \textbf{Example 1:}  Hyperbolic System of Balance Laws~\cite{diagne2012lyapunov}\vspace{-2mm}
\[
\dot{\mbf u}=-M\mbf u-L\mbf u_s\qquad K_1\mbf u(t,0)+K_2 \mbf u(t,1)=0 \qquad \mbf x:=\mbf u_s \vspace{-4mm}
\]
{\tiny
\begin{align*}
L&=\bmat{   -3.76 \hspace{-2mm}  &     0  \hspace{-2mm} &      0\\
         0\hspace{-2mm}  &  0.5 \hspace{-2mm} &       0\\
         0 \hspace{-2mm} &       0 \hspace{-2mm}  & 5.25}\quad M=\bmat{    1.03 \hspace{-2mm} & .05\hspace{-2mm} &  .92\\
   1.03\hspace{-2mm} &  .05\hspace{-2mm} &  .92\\   1.03\hspace{-2mm} &  .05 \hspace{-2mm} & .92}\\
K_1&=\bmat{    1\hspace{-2mm} &  -1.95 \hspace{-2mm} & 34.5\\   0   \hspace{-2mm} &     0  \hspace{-2mm}  &     0\\       0 \hspace{-2mm}  &      0  \hspace{-2mm} &      0},\quad 
K_2=\bmat{          0   \hspace{-2mm}  &    0  \hspace{-2mm}  &     0\\   -0.55 \hspace{-2mm} &  1   \hspace{-2mm}  &    0\\   -1.75 \hspace{-2mm} &       0  \hspace{-2mm}&  1}\\[-6mm]
\end{align*}}
\noindent \textbf{Example 2:} Reaction-Diffusion w/ spatial coefficients~\cite{gahlawat_2017TAC}.\vspace{-2mm}
\begin{align*}
\dot{\mbf u}(t,s)&=a(s)\mbf u(t,s)+b(s)\mbf u_s(t,s)+c(s)\mbf u(s)\\
\mbf u(t,0)&=\mbf u_s(t,1)=0 \qquad \mbf x:=\mbf u_{ss}\qquad a(s)=s^3-s^2+2\\
b(s)&=3s^2 - 2s,\quad c(s)=-s^3/2 + 1.3s^2- 1.5s + 5.35\\[-7mm]
\end{align*}

%
\noindent \textbf{Example 3:}  Euler-Bernoulli Beam Equation. Given $\ddot{\mbf u}=-\mbf u_{ssss}$, use the first-order form defined by $\mbf u_1=\dot{\mbf u}$ and $\mbf u_2=\mbf u$, and we have $\dot{\mbf u}_1=-\mbf u_{2,ssss}$, $\dot{\mbf u}_2=\mbf u_1$
where $\mbf u_2(t,0)=\mbf u_{2,ss}(t,1)=\mbf u_{2,s}(t,0)=\mbf u_{2,sss}(t,1)=0$. Fundamental state is defined as $\mbf x:=\bmat{\dot{\mbf u}& \mbf u_{ssss}}^T$.\vspace{-0mm}

\noindent \textbf{Example 4:}  Tip-Damped Wave Equation~\cite{chen1979energy}. Given $\ddot{\mbf u}=\mbf u_{ss}-2 \dot{\mbf u}-\mbf u$, use the first-order form defined by $\mbf u_1=\mbf u_t$ and $\mbf u_2=\mbf u$, and we have
\[
\dot{\mbf u}=\bmat{-2 & -1\\1&0}\mbf u+\bmat{1&0\\0&0}\mbf u_{ss} \qquad \mbf x:=\bmat{\dot{\mbf u}_s\\ \mbf u_{ss}}
\]
where $\mbf u_1(t,0)=\mbf u_2(t,0)=0$ and $\mbf u_s(t,1)=-\dot{\mbf u}(t,1)$.

\begin{table}[hb]\vspace{-4mm}
\begin{center}{\scriptsize
\begin{tabular}{c||c|c|c||c|c|c||c|c}
{\hspace{-2mm}\scriptsize  }\hspace{-2mm} &\multicolumn{3}{c}{Lyapunov} & \multicolumn{3}{c}{Exponential}& \multicolumn{2}{c}{Finite Energy} \\
{\hspace{-2mm}\scriptsize  Ex. }\hspace{-2mm} 
                & \hspace{-.5mm}PIE2PDE\hspace{-.5mm}  & \hspace{-.5mm}PIE\hspace{-.5mm} & PDE & \hspace{-1.5mm} PIE2PDE \hspace{-1.5mm} & \hspace{-.5mm}PIE\hspace{-.5mm} & PDE&  \hspace{-.5mm}PIE2PDE\hspace{-.5mm}  & PDE \\
\hline
1         &  X  &  & X  & X &  &  X & X&X\\
2         &  X  &  & X  & X &  &  X & X&X\\
3           &  X  &  &  &   &  &   & &\\
4        & X  &   &  &   &  &   & X &\\
\end{tabular}}
\end{center}\vspace{-2mm}
\caption{Notions of stability which hold for Examples 1-4 using the indicated definition of $\mbf u$ and $\mbf x$ as verified by PIETOOLS~\cite{PIETOOLS2025}.}\label{tab:results}\end{table}\vspace{-2mm}\vspace{-2mm}

\section{Conclusion}\vspace{-2mm}
We have proposed a hierarchical classification of Lyapunov, exponential, and finite-energy stability notions in terms of the fundamental state, $\mbf x$, which is related to the PDE state as $\mbf x=D^\alpha \mbf u$ and $\mbf u=\mcl T \mbf x$ and which satisfies $\mcl T \dot{\mbf x}=\mcl A \mbf x$ for some PI operators, $\mcl T,\mcl A$. This framework unifies Lyapunov stability conditions which may or may not include partial derivatives of the PDE state in lower bounds, upper bounds, and negativity conditions. Lyapunov conditions which establish stability, but not PDE stability often imply PIE to PDE stability. This framework allows for classification of PDEs such as the wave and beam equations. Furthermore, operator inequality conditions are provided for each notion of stability and may be verified using software such as PIETOOLS.\vspace{-2mm}

\vspace{-1mm}
\bibliographystyle{IEEEtran}
\bibliography{peet_bib,CDC_2026}

\appendix[Notes and Commentary]

\subsection{Notes and Commentary: Section~\ref{sec:LF_bounds}}\label{appx:notes}
\vspace{3mm}

\begin{note}\label{note:non_coercive}
To the list in Definition~\ref{def:Lyap_positive}, we might have added that $V$ is \textbf{Positive definite} if $V(\mbf x)> 0$ for all $x \neq 0$ -- i.e. positivity in the non-coercive sense. However, this definition is somewhat difficult to interpret constructively. One possible mechanism is to say that $V$ is positive definite if $V(\mbf x)\ge \epsilon \norm{\mcl R\mbf x}^2$ for some invertible mapping $\mcl R$ -- so that PDE positivity then becomes a special case with $\mcl R=\mcl T$. 

This is similar to the non-coercive Lyapunov function proposed for the heat equation Example 5.1 in~\cite{jacob_2018} which is defined as $V(\mbf u)=-\ip{A^{-1}\mbf u}{\mbf u}$ where $A\mbf u=\mbf u''$ where $A:\mcl D \rightarrow L_2$ with $\mcl D:=\{f \in H^2\;:\; f(1)=f(0)=0\}$.
Translating this into the notation used in the paper, we have $\mcl T := A^{-1}$ where
\[
(\mcl T \mbf x):=\int_0^s\theta (s-1)\mbf x(\theta)d\theta+\int_s^1 s (\theta-1)\mbf x(\theta)d\theta\vspace{-2mm}
\]
and $\mbf u=\mcl T\mbf x$ where $\mbf x=\mbf u''$. This is the same domain and hence the same $\mcl T=A^{-1}$ as was used in the illustrative example in the introduction. Hence the Lyapunov function $V(\mbf u)$ in~\cite{jacob_2018} may be expressed as $V(\mbf x)=-\ip{\mcl T\mbf x}{\mcl T \mcl T \mbf x}_{L_2}$. However, one can show in this case that $\mcl T=-\mcl M^* \mcl M$ where
\begin{align}
(\mcl M\mbf x)(s)&=\int_a^b G_M(s,\theta)\mbf x(\theta) e_n d\theta\label{eqn:operator_M} \\ \notag
G_M(s,\theta)&=
\begin{cases}
\theta&\theta < s\\
\theta-1 & s < \theta
\end{cases}
\end{align}
and hence the Lyapunov function has the form $V(\mbf x)=\ip{\mcl M\mcl T\mbf x}{\mcl M \mcl T \mbf x}_{L_2}=\norm{\mcl M \mcl T \mbf x}^2_{L_2}$. Note that, $\mcl M, \mcl T\in \Pi_2$, and since $\Pi_2$ is a composition *-algebra, $\mcl R:=\mcl M\mcl T$ is likewise an element of $\Pi_2$. Furthermore, because $\norm{\mcl R}\le \norm{\mcl M}\norm{\mcl T}$, we have that $V(\mbf x)=\norm{\mcl R \mbf x}^2_{L_2}$ is non-coercive since $V(\mbf x)\ge \epsilon \norm{\mcl T x}^2$ is equivalent to $-\mcl T=\mcl M^* \mcl M \ge \epsilon I$ which is not possible because $\mcl T,\mcl M \in \Pi_2$ are integral operators (There is not multiplier term -- i.e. $R_0=0$).
\end{note}

\vspace{3mm}

\subsection{Notes and Commentary: Section~\ref{sec:stability_notions}}

\vspace{3mm}

\begin{note}\label{note:asymptotic}
There is a significant question of whether FE stability implies asymptotic stability. Suppose we have FE PIE stable or FE PDE to PIE stable. Then $\mbf x \in L_2$ and hence
\[
\norm{\mcl T \dot{\mbf x}}_{L_2}=\norm{\mcl A \mbf x}_{L_2}\le \norm{\mcl A}_{\mcl L(L_2)}\norm{\mbf x}_{L_2}.
\]
Hence $\mcl T\dot{\mbf x} \in L_2$, which implies $\lim_{t\rightarrow \infty}\norm{\mcl T\mbf x(t)}\rightarrow 0$.  Unfortunately, however, this only holds for FE PIE stable and FE PDE to PIE stable. \blue{For an illustration of the use of FE stability to prove asymptotic stability, we refer to work in~\cite{yao_2025asymptotical}.} \vspace{-1mm}


%
\end{note}

\vspace{3mm}

\subsection{Notes and Commentary: Section~\ref{sec:LPI_conditions}}

\vspace{3mm}

\begin{note}[Proof of Lemma~\ref{lem:LPI_bounding}]\label{note:lemma6_proof}

Recall the statement of Lemma~\ref{lem:LPI_bounding}.

\begin{lem}[LPIs for Lyapunov Bounds]
Given $\mcl T,\mcl P \in \Pi_3$, if $V(\mbf x)=\ip{\mbf x}{\mcl P \mbf x}$, then 
\begin{enumerate}
\item positive semidefinite is equivalent to  $\mcl P \ge 0$
\item PDE positivity is equivalent to  $\mcl P \ge \epsilon \mcl T^* \mcl T$
\item PIE positivity is equivalent to  $\mcl P \ge \epsilon I$
\item PDE bounded is equivalent to  $\mcl P \le C \mcl T^* \mcl T$
\item PIE bounded is equivalent to  $\mcl P \le C I$
\end{enumerate}
for some $\epsilon,C>0$. If $\dot V(\mbf x)=\ip{\mbf x}{\mcl D \mbf x}$ with $\mcl D \in \Pi_3$, then
\begin{enumerate}
\item PDE negative is equivalent to $\mcl D \le -\alpha \mcl T^* \mcl T$
\item  PIE negative is equivalent to $\mcl D \le -\alpha I$
\item  Lyapunov negative is equivalent to $\mcl D \le -\alpha P$
\end{enumerate}
for some $\alpha>0$.
\end{lem}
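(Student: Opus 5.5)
The plan is to reduce every item to the definition of operator inequality on $L_2$: for bounded self-adjoint (or at least bounded) operators $\mcl X,\mcl Y\in\mcl L(L_2)$, $\mcl X\ge \mcl Y$ means $\ip{\mbf x}{(\mcl X-\mcl Y)\mbf x}\ge 0$ for all $\mbf x\in L_2$. Each property in Definitions~\ref{def:Lyap_positive}, \ref{def:Lyap_bounded} and~\ref{def:Lyap_negative} is a quadratic inequality in $\mbf x$ that must hold for all $\mbf x\in L_2$. Substituting $V(\mbf x)=\ip{\mbf x}{\mcl P\mbf x}$ or $\dot V(\mbf x)=\ip{\mbf x}{\mcl D\mbf x}$ therefore turns it into positivity of a single operator. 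The only identity needed is $\norm{\mcl T\mbf x}^2=\ip{\mcl T\mbf x}{\mcl T\mbf x}=\ip{\mbf x}{\mcl T^*\mcl T\mbf x}$, valid since $\mcl T\in\Pi_3\subset\mcl L(L_2)$ has a bounded adjoint (which again lies in $\Pi_3$ by the *-algebra property). Likewise $\norm{\mbf x}^2=\ip{\mbf x}{I\mbf x}$.

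I would then run through the items in order, treating each as a two-line equivalence. For PDE positivity: $V(\mbf x)\ge\epsilon\norm{\mcl T\mbf x}^2$ for all $\mbf x$ holds if and only if $\ip{\mbf x}{(\mcl P-\epsilon\mcl T^*\mcl T)\mbf x}\ge 0$ for all $\mbf x$, which is the definition of $\mcl P\ge\epsilon\mcl T^*\mcl T$. The same $\epsilon$ works in both directions, so the existential quantifier over $\epsilon$ (or $C$, $\alpha$) transfers verbatim.
\begin{itemize}
\item Positive semidefiniteness gives $\mcl P\ge 0$, PIE positivity gives $\mcl P\ge\epsilon I$, and the two bounds give $\mcl P\le C\mcl T^*\mcl T$ and $\mcl P\le CI$; each is identical in form to the PDE positivity case.
\item The derivative conditions use $\dot V(\mbf x)=\ip{\mbf x}{\mcl D\mbf x}$ in exactly the same way. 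For Lyapunov negativity, substitute $V(\mbf x)=\ip{\mbf x}{\mcl P\mbf x}$ on the right-hand side, obtaining $\ip{\mbf x}{(\mcl D+\alpha\mcl P)\mbf x}\le 0$, i.e. $\mcl D\le-\alpha\mcl P$.
\end{itemize}

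One point needs care: the operator order is usually defined for self-adjoint operators, and $\mcl P$ or $\mcl D$ need not be self-adjoint a priori. I would note that since $L_2$ is taken to be real here ($V$ maps to $\R^+$), only the symmetric part contributes: $\ip{\mbf x}{\mcl P\mbf x}=\ip{\mbf x}{\tfrac12(\mcl P+\mcl P^*)\mbf x}$. So either the inequalities are read as quadratic-form inequalities, or $\mcl P,\mcl D$ are replaced by their symmetric parts without changing $V$ or $\dot V$. Apart from this bookkeeping there is no real obstacle. The equivalences are definitional once the identity $\norm{\mcl T\mbf x}^2=\ip{\mbf x}{\mcl T^*\mcl T\mbf x}$ is in hand. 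The one remark worth making is that item 5 (PIE boundedness) always holds for some $C$ because $\mcl P$ is bounded, with $C=\norm{\mcl P}$ by Cauchy--Schwarz.
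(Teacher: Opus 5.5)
Your proposal is correct and follows the paper's own route. Each property is rewritten, via $\norm{\mcl T\mbf x}^2=\ip{\mbf x}{\mcl T^*\mcl T\mbf x}$ and $\norm{\mbf x}^2=\ip{\mbf x}{\mbf x}$, as nonnegativity of a single quadratic form, which is by definition the stated operator inequality with the same constant. Your extra remarks on passing to symmetric parts and on item 5 holding automatically, together with your explicit treatment of the Lyapunov-negativity item (which the paper's proof leaves implicit), are harmless and consistent with the paper.
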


\begin{proof}
Statement 1) follows from the definition of operator positivity $\mcl P\ge 0$. Statement 2) follows since $\mcl P - \epsilon \mcl T^* \mcl T\ge 0$ if and only if $V(\mbf x)-\epsilon \norm{\mcl T \mbf x}^2=\ip{\mbf x}{(\mcl P - \epsilon \mcl T^* \mcl T) \mbf x}\ge 0$ for all $\mbf x\in L_2$. Statement 3) follows since $\mcl P - \epsilon I\ge 0$ if and only if $V(\mbf x)-\epsilon \norm{\mbf x}^2=\ip{\mbf x}{(\mcl P - \epsilon I) \mbf x}\ge 0$ for all $\mbf x\in L_2$. Statement 4) follows since $C \mcl T^* \mcl T-\mcl P \ge 0$ if and only if $C\norm{\mcl T \mbf x}^2-V(\mbf x)=\ip{\mbf x}{(C \mcl T^* \mcl T-\mcl P) \mbf x}\ge 0$ for all $\mbf x\in L_2$. Statement 5) follows since $C I-\mcl P \ge 0$ if and only if $C\norm{\mbf x}^2-V(\mbf x)=\ip{\mbf x}{(C I-\mcl P) \mbf x}\ge 0$ for all $\mbf x\in L_2$.

Statement 6) follows since $\alpha \mcl T^* \mcl T-\mcl D \ge 0$ if and only if $\alpha \norm{\mcl T \mbf x}^2-\dot V(\mbf x)=\ip{\mbf x}{(\alpha \mcl T^* \mcl T-\mcl D) \mbf x}\ge 0$ for all $\mbf x\in L_2$. Statement 7) follows since $\alpha I-\mcl D \ge 0$ if and only if $\alpha \norm{\mbf x}^2-\dot V(\mbf x)=\ip{\mbf x}{(\alpha I-\mcl D) \mbf x}\ge 0$ for all $\mbf x\in L_2$.
\end{proof}

\end{note}

\vspace{3mm}

\begin{note}\label{note:V1}
\textbf{Properties of $V_1$:} First consider $V_1(\mbf x)$, which is perhaps the most obvious form of Lyapunov function, being defined solely in terms of the PDE state, $\mcl T \mbf x$. Applying Lem.~\ref{lem:LPI_bounding} with $\mcl P \mapsto \mcl T^* \mcl P \mcl T$, $V_1$ is PDE positive if $\mcl P \ge \epsilon I$. However, it is not usually possible for $V_1$ to be PIE positive, since this would imply $\mcl T^* \mcl P \mcl T \ge \epsilon I$. If we recall that typically $\mcl T \in \Pi_2$ (or at least some part thereof), this implies that $\mcl T^* \mcl P\mcl T \in \Pi_2$ and hence cannot be coercive. 
Next, we note that $V_1$ is both PDE and PIE bounded, since $\mcl T^* \mcl P \mcl T\le \norm{\mcl P}\mcl T^* \mcl T$ implies PDE bounded and PDE bounded implies PIE bounded. 
For $\mcl T \dot{\mbf x}=\mcl A\mbf x$, the derivative of $V_1$ is \vspace{-2mm}
\[
\dot V_1(\mbf x):=\ip{\mcl T\mbf x}{ \mcl P \mcl A \mbf x}+\ip{ \mcl P \mcl A \mbf x}{\mcl T\mbf x}=\ip{\mbf x}{(\mcl T^*\mcl P \mcl A+\mcl A^* \mcl P\mcl T)\mbf x}.\vspace{-2mm}
\]
Thus $\dot V_1$ is PDE negative if $\mcl T^* \mcl P \mcl A+\mcl A^* \mcl P \mcl T\le -\epsilon \mcl T^* \mcl T$. However, as for PIE positivity, $\dot V$ is unlikely to be PIE negative since this would require $\mcl T^* \mcl P \mcl A+\mcl A^* \mcl P \mcl T\le -\epsilon I$ which is not possible unless $\mcl T,\mcl A \in \Pi_3$.
\end{note}

\vspace{3mm}

\begin{note}\label{note:V2}
\textbf{Properties of $V_2$:}  Consider $V_2=\ip{\mbf x}{\mcl Q \mcl T \mbf x}$ where $\mcl Q \mcl T=(\mcl Q \mcl T)^*$. The difference between $V_1$ and $V_2$ is that $V_2$ is defined in terms of both the PDE state ($\mcl T \mbf x$) and PIE state ($\mbf x$). For example, in the heat equation, for $\mcl Q=-I$ we would have $V=-\ip{\mbf u}{\mbf u_{ss}}$ which reduces to something like $V=\ip{\mbf u_s}{\mbf u_s}$, depending on boundary conditions. 
Applying Lem.~\ref{lem:LPI_bounding} with $\mcl P \mapsto \mcl Q \mcl T$, $V_2$ is PDE positive if $\mcl Q \mcl T\ge \epsilon \mcl T^* \mcl T$. As with $V_1$, PIE positivity is not possible since it implies that $\mcl Q \mcl T \not\in \Pi_2$. $V_2$ is always PIE bounded and is PDE bounded if $\mcl Q \mcl T \le C \mcl T^* \mcl T$. 
For $\mcl T \dot{\mbf x}=\mcl A\mbf x$, the derivative of $V_2$ is \vspace{-2mm}
\[
\dot V_2(\mbf x):=\ip{\mbf x}{ \mcl Q \mcl A \mbf x}+\ip{\mcl Q \mcl A \mbf x}{ \mbf x}=\ip{\mbf x}{(\mcl Q \mcl A+\mcl A^*\mcl Q^*)\mbf x}. \vspace{-2mm}
\]
Thus $\dot V_2$ is PDE negative if $ \mcl Q \mcl A+\mcl A^* \mcl Q^*\le -\epsilon \mcl T^* \mcl T$ and PIE negative if $ \mcl Q \mcl A+\mcl A^* \mcl Q^*\le -\alpha I$. Unlike $\dot V_1$, it is possible for $\dot V_2$ to be PIE negative. Finally, note that $V_1$ is a special case of $V_2$ with $\mcl Q=\mcl T^*\mcl P$.
\end{note}

\vspace{3mm}

\begin{note}[Proof of Lemma~\ref{lem:PIE2PDE_lyapunov_LPI}]\label{note:lemma7_proof}

Recall the statement of Lemma~\ref{lem:PIE2PDE_lyapunov_LPI}.

\begin{lem}
Given $\mcl T,\mcl A$, the PIE $\mcl T \dot{\mbf x}=\mcl A \mbf x$ is
\begin{enumerate}
\item PIE to PDE stable if $\mcl Q \mcl T=(\mcl Q \mcl T)^*\ge \epsilon \mcl T^* \mcl T$ and $\mcl Q\mcl A+\mcl A^*\mcl Q^*\le 0$.
\item PIE stable if $\mcl Q \mcl T=(\mcl Q \mcl T)^* \ge \epsilon I$  and $\mcl Q\mcl A+\mcl A^*\mcl Q^*\le 0$.
\item PDE stable if $\mcl P \ge \epsilon I$, $\mcl T^* \mcl P \mcl A+\mcl A^* \mcl P \mcl T\le 0$.
\item PDE to PIE stable if $\mcl Q \mcl T=(\mcl Q \mcl T)^* \ge \epsilon I$, $\mcl Q \mcl T\le C \mcl T^* \mcl T$, and $\mcl Q\mcl A+\mcl A^*\mcl Q^*\le 0$.
\end{enumerate}
for some $\mcl Q,\mcl P \in \Pi_3$ and $\epsilon,C>0$. 
\end{lem}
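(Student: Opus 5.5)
The plan is to reduce each statement to Lemma~\ref{lem:PIE2PDE_lyapunov}, using the candidate functions $V_1$ and $V_2$ and translating the operator inequalities into Lyapunov properties via Lemma~\ref{lem:LPI_bounding}.

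For statements 1), 2) and 4), take $V_2(\mbf x)=\ip{\mbf x}{\mcl Q\mcl T\mbf x}$, which is real-valued because $\mcl Q\mcl T=(\mcl Q\mcl T)^*$. Along any solution of $\mcl T\dot{\mbf x}=\mcl A\mbf x$, self-adjointness gives
\[
\frac{d}{dt}\ip{\mbf x}{\mcl Q\mcl T\mbf x}=\ip{\mcl Q\mcl T\mbf x}{\dot{\mbf x}}+\ip{\dot{\mbf x}}{\mcl Q\mcl T\mbf x}=\ip{\mbf x}{\mcl Q\mcl T\dot{\mbf x}}+\ip{\mcl Q \mcl T\dot{\mbf x}}{\mbf x}.
\]
Substituting $\mcl T\dot{\mbf x}=\mcl A\mbf x$ yields $\dot V_2(\mbf x)=\ip{\mbf x}{(\mcl Q\mcl A+\mcl A^*\mcl Q^*)\mbf x}$, as in Note~\ref{note:V2}. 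The hypothesis $\mcl Q\mcl A+\mcl A^*\mcl Q^*\le 0$ then makes $\dot V_2$ negative semidefinite.

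Next apply Lemma~\ref{lem:LPI_bounding} with $\mcl P\mapsto \mcl Q\mcl T$.
\begin{itemize}
\item The condition $\mcl Q\mcl T\ge\epsilon\mcl T^*\mcl T$ is exactly PDE positivity.
\item The condition $\mcl Q\mcl T\ge \epsilon I$ is PIE positivity.
\item The condition $\mcl Q\mcl T\le C\mcl T^*\mcl T$ is PDE boundedness.
\item PIE boundedness holds automatically, since $\mcl Q\mcl T$ is bounded on $L_2$ (both factors are in $\Pi_3\subset\mcl L(L_2)$), so $\mcl Q\mcl T\le\norm{\mcl Q\mcl T}I$.
\end{itemize}
Statements 1), 2) and 4) then follow from items 1), 2) and 4) of Lemma~\ref{lem:PIE2PDE_lyapunov}, respectively.

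For statement 3), take $V_1(\mbf x)=\ip{\mcl T\mbf x}{\mcl P\mcl T\mbf x}$ with $\mcl P=\mcl P^*$. Its derivative is $\ip{\mbf x}{(\mcl T^*\mcl P\mcl A+\mcl A^*\mcl P\mcl T)\mbf x}\le 0$, so $\dot V_1$ is negative semidefinite. From $\mcl P\ge\epsilon I$ we get $\mcl T^*\mcl P\mcl T\ge\epsilon\mcl T^*\mcl T$, which is PDE positivity. From $\mcl P\le\norm{\mcl P}I$ we get $\mcl T^*\mcl P\mcl T\le\norm{\mcl P}\mcl T^*\mcl T$, which is PDE boundedness. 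Item 3) of Lemma~\ref{lem:PIE2PDE_lyapunov} then gives PDE stability.

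The only delicate point is justifying that $\dot V$ exists in the sense of Defn.~\ref{def:Lyap_negative}, i.e.\ differentiating along solutions. This needs $\mcl T\mbf x(t)$ to be differentiable with derivative $\mcl A\mbf x(t)$, which the standing classical well-posedness assumption supplies. In the $V_2$ case, the derivative must also be moved onto $\mcl T\mbf x$ using $\mcl Q\mcl T=\mcl T^*\mcl Q^*$, so that $\dot{\mbf x}$ never appears alone. This is why the identity above is written with $\dot{\mbf x}$ paired only with $\mcl Q\mcl T$, where it appears as $\mcl T\dot{\mbf x}$. I expect this step to be the main obstacle; the rest is bookkeeping.
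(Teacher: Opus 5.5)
Your proposal is correct and follows the paper's proof (Note~\ref{note:lemma7_proof}) essentially step for step. It uses $V_2(\mbf x)=\ip{\mbf x}{\mcl Q\mcl T\mbf x}$ for statements 1), 2), 4) and $V_1(\mbf x)=\ip{\mcl T\mbf x}{\mcl P\mcl T\mbf x}$ for statement 3), reads off positivity and boundedness via Lemma~\ref{lem:LPI_bounding} (PIE boundedness from $\mcl Q\mcl T\le\norm{\mcl Q\mcl T}I$), uses the same derivative identity, and concludes from Lemma~\ref{lem:PIE2PDE_lyapunov}.

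The only quibble is your closing remark. Your displayed product-rule step still differentiates $\mbf x(t)$ itself. So, as in the paper, that step rests on the standing assumption that $\dot V$ exists per Defn.~\ref{def:Lyap_negative}, not on the rewriting via $\mcl Q\mcl T=\mcl T^*\mcl Q^*$.
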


\begin{proof}
For statement 1), let $V(\mbf x)=\ip{\mbf x}{\mcl Q\mcl T \mbf x}$. Then by Lem.~\ref{lem:LPI_bounding}, $\mcl Q \mcl T=(\mcl Q \mcl T)^* \ge \epsilon \mcl T^* \mcl T$ implies $V$ is PDE positive. Since $\mcl Q, \mcl T\in \Pi_3 \subset \mcl L(L_2)$, $\mcl Q \mcl T \le \norm{\mcl Q \mcl T}_{\mcl L(L_2)} I$ and hence by Lem.~\ref{lem:LPI_bounding} $V$ is PIE bounded.  If $\mcl T \dot{\mbf x}(t)=\mcl A \mbf x(t)$, we have
that $\mcl Q\mcl A+\mcl A^*\mcl Q^*\le 0$ implies
\begin{align*}
\frac{d}{dt}V(\mbf x(t))&=\ip{\dot{\mbf x}(t)}{\mcl Q\mcl T \mbf x(t)}+\ip{\mbf x(t)}{\mcl Q\mcl T \dot{\mbf x}(t)}\\
&=\ip{\mcl Q\mcl T\dot{\mbf x}(t)}{ \mbf x(t)}+\ip{\mbf x(t)}{\mcl Q\mcl A \mbf x(t)}\\
&=\ip{\mcl Q\mcl A\mbf x(t)}{ \mbf x(t)}+\ip{\mbf x(t)}{\mcl Q\mcl A \mbf x(t)}\le 0
\end{align*}
and hence $\dot V(x)$ is negative semidefinite. We conclude PIE to PDE stability by Lem.~\ref{lem:PIE2PDE_lyapunov}.

For statement 2), let $V(\mbf x)=\ip{\mbf x}{\mcl Q\mcl T \mbf x}$. Then by Lem.~\ref{lem:LPI_bounding}, $\mcl Q \mcl T=(\mcl Q \mcl T)^* \ge \epsilon I$ implies $V$ is PIE positive. As in the proof of statement 1), $V$ is PIE bounded and $\dot V$ negative semidefinite. We conclude PIE stability by Lem.~\ref{lem:PIE2PDE_lyapunov}.

For statement 3), let $V(\mbf x)=\ip{\mbf x}{\mcl T^* \mcl P \mcl T\mbf x}$. Then $\mcl P \ge \epsilon I$ implies $\mcl T^* \mcl P \mcl T \ge \epsilon \mcl T^*\mcl T$ and hence  by Lem.~\ref{lem:LPI_bounding}, $V$ is PDE positive. Since  $\mcl P\in \Pi_3 \subset \mcl L(L_2)$, $\mcl T^*\mcl P \mcl T \le \norm{\mcl P}_{\mcl L(L_2)} \mcl T^* \mcl T$ and hence by Lem.~\ref{lem:LPI_bounding} $V$ is PDE bounded.  If $\mcl T \dot{\mbf x}(t)=\mcl A \mbf x(t)$, we have that $\mcl T^* \mcl P \mcl A+\mcl A^* \mcl P \mcl T\le 0$ implies
\begin{align*}
\frac{d}{dt}V(\mbf x(t))&=\ip{\mcl T\dot{\mbf x}(t)}{\mcl P\mcl T \mbf x(t)}+\ip{\mcl T\mbf x(t)}{\mcl P\mcl T \dot{\mbf x}(t)}\\
&=\ip{\mcl A\mbf x(t)}{ \mcl P \mcl T\mbf x(t)}+\ip{\mcl T \mbf x(t)}{\mcl P\mcl A \mbf x(t)}\le 0
\end{align*}
and hence $\dot V(x)$ is negative semidefinite. We conclude PDE stability by Lem.~\ref{lem:PIE2PDE_lyapunov}.

For statement 4), let $V(\mbf x)=\ip{\mbf x}{\mcl Q\mcl T \mbf x}$. As in statement 2), $V$ is PIE positive. Next, $\mcl Q \mcl T\le C \mcl T^* \mcl T$ implies $V$ is PDE bounded by Lem.~\ref{lem:LPI_bounding}. As in statement 1) $\dot V(x)$ is negative semidefinite. We conclude PDE to PIE stability by Lem.~\ref{lem:PIE2PDE_lyapunov}.
\end{proof}
\end{note}

\vspace{3mm}

\begin{note}[Proof of Lemma~\ref{lem:exp_stable_LPIs}]\label{note:lemma8_proof}

Recall the statement of Lemma~\ref{lem:exp_stable_LPIs}.

\begin{lem}[LPIs for Exp. Stability]
Given $\mcl T,\mcl A$, the PIE $\mcl T \dot{\mbf x}=\mcl A \mbf x$ is
\begin{enumerate}
\item Exp. PIE to PDE stable if $\mcl Q \mcl T=(\mcl Q \mcl T)^* \ge \epsilon \mcl T^* \mcl T$, $\mcl Q\mcl A+\mcl A^*\mcl Q^*\le -\alpha \mcl Q \mcl T$
\item Exp. PIE stable if $\mcl Q \mcl T=(\mcl Q \mcl T)^* \ge \epsilon I$  and $\mcl Q\mcl A+\mcl A^*\mcl Q^*\le -\alpha \mcl Q \mcl T$
\item Exp. PDE stable if $\mcl P \ge \epsilon I$, $\mcl T^* \mcl P \mcl A+\mcl A^* \mcl P \mcl T\le -\alpha\mcl T^* \mcl P \mcl T$
\item Exp. PDE to PIE stable if $\mcl Q \mcl T=(\mcl Q \mcl T)^* \ge \epsilon I$, $\mcl Q \mcl T\le C \mcl T^* \mcl T$, and $\mcl Q\mcl A+\mcl A^*\mcl Q^*\le -\alpha \mcl Q \mcl T$
  \end{enumerate}
  for some $\mcl Q,\mcl P \in \Pi_3$ and $\epsilon,C,\alpha>0$.
\end{lem}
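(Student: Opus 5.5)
The plan mirrors the proof of Lemma~\ref{lem:PIE2PDE_lyapunov_LPI}. We use the same candidate Lyapunov functions and replace the negative semidefinite derivative with Lyapunov negativity, so that Lemma~\ref{lem:PIE2PDE_exponential} applies in place of Lemma~\ref{lem:PIE2PDE_lyapunov}.

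For statements 1), 2) and 4) we take $V(\mbf x)=\ip{\mbf x}{\mcl Q\mcl T\mbf x}$.

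\textbf{Positivity and bounds.} These are obtained exactly as in the proof of Lemma~\ref{lem:PIE2PDE_lyapunov_LPI}, via Lemma~\ref{lem:LPI_bounding} with $\mcl P\mapsto \mcl Q\mcl T$:
\begin{itemize}
\item $\mcl Q\mcl T\ge \epsilon\mcl T^*\mcl T$ gives PDE positivity;
\item $\mcl Q\mcl T\ge\epsilon I$ gives PIE positivity;
\item $\mcl Q\mcl T\le \norm{\mcl Q\mcl T}_{\mcl L(L_2)}I$ gives PIE boundedness automatically, since $\mcl Q,\mcl T$ are bounded;
\item $\mcl Q\mcl T\le C\mcl T^*\mcl T$ gives PDE boundedness in statement 4).
\end{itemize}

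\textbf{Derivative.} Along solutions of $\mcl T\dot{\mbf x}=\mcl A\mbf x$, self-adjointness of $\mcl Q\mcl T$ lets us move $\mcl Q\mcl T$ onto $\dot{\mbf x}$. This gives
\[
\dot V(\mbf x)=\ip{\mbf x}{(\mcl Q\mcl A+\mcl A^*\mcl Q^*)\mbf x}\le -\alpha\ip{\mbf x}{\mcl Q\mcl T\mbf x}=-\alpha V(\mbf x).
\]
So $\dot V$ is Lyapunov negative (statement 8 of Lemma~\ref{lem:LPI_bounding} with $\mcl D=\mcl Q\mcl A+\mcl A^*\mcl Q^*$ and $\mcl P\mapsto\mcl Q\mcl T$). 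The three statements then follow from statements 1), 2) and 4) of Lemma~\ref{lem:PIE2PDE_exponential}.

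For statement 3) we take $V(\mbf x)=\ip{\mcl T\mbf x}{\mcl P\mcl T\mbf x}=\ip{\mbf x}{\mcl T^*\mcl P\mcl T\mbf x}$.
\begin{itemize}
\item $\mcl P\ge\epsilon I$ gives $\mcl T^*\mcl P\mcl T\ge\epsilon\mcl T^*\mcl T$, so $V$ is PDE positive.
\item $\mcl T^*\mcl P\mcl T\le\norm{\mcl P}\mcl T^*\mcl T$, so $V$ is PDE bounded.
\item The derivative is $\dot V=\ip{\mbf x}{(\mcl T^*\mcl P\mcl A+\mcl A^*\mcl P\mcl T)\mbf x}$, as in Note~\ref{note:V1}. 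The hypothesis gives $\dot V\le-\alpha\ip{\mbf x}{\mcl T^*\mcl P\mcl T\mbf x}=-\alpha V$.
\end{itemize}
Statement 3) of Lemma~\ref{lem:PIE2PDE_exponential} then gives Exp.\ PDE stability.

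\textbf{Main obstacle.} The only delicate step is justifying the derivative computation, i.e.\ that $\dot V$ exists in the sense of Defn.~\ref{def:Lyap_negative}. We proceed as the paper does and assume enough regularity that $t\mapsto\mbf x(t)$ is differentiable and $\mcl T\dot{\mbf x}=\mcl A\mbf x$ holds pointwise in $t$; the inner-product product rule then applies. The subsequent step $\dot V\le-\alpha V\Rightarrow V(\mbf x(t))\le e^{-\alpha t}V(\mbf x(0))$ is Grönwall's inequality, which is already built into Lemma~\ref{lem:PIE2PDE_exponential}.

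A minor point is the square root. Lemma~\ref{lem:PIE2PDE_exponential} yields bounds on squared norms with rate $e^{-\alpha t}$, so taking square roots gives decay rate $\alpha/2$ and constant $\sqrt{C/\epsilon}$, consistent with Defn.~\ref{def:PIE2PDE_exponential}.
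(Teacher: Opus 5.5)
Your proposal is correct and follows the paper's proof essentially line for line. It uses the same candidate functions: $V=\ip{\mbf x}{\mcl Q\mcl T\mbf x}$ for statements 1), 2) and 4), and $V=\ip{\mbf x}{\mcl T^*\mcl P\mcl T\mbf x}$ for statement 3). Positivity and bounds come from Lemma~\ref{lem:LPI_bounding}, Lyapunov negativity comes from the same derivative computation, and the conclusion comes from Lemma~\ref{lem:PIE2PDE_exponential}. Your added remarks on regularity and on the square root (rate $\alpha/2$, constant $\sqrt{C/\epsilon}$) are accurate details that the paper leaves implicit.
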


\begin{proof}
For statement 1), let $V(\mbf x)=\ip{\mbf x}{\mcl Q\mcl T \mbf x}$. Then as in the proof of Lemma~\ref{lem:PIE2PDE_lyapunov_LPI}, $V$ is PDE positive and PIE bounded. As in the proof of Lemma~\ref{lem:PIE2PDE_lyapunov_LPI}, if $\mcl T \dot{\mbf x}(t)=\mcl A \mbf x(t)$, we have
\[
\frac{d}{dt}V(\mbf x(t))=\ip{\mcl Q\mcl A\mbf x(t)}{ \mbf x(t)}+\ip{\mbf x(t)}{\mcl Q\mcl A \mbf x(t)}
\]
and since $\mcl Q\mcl A+\mcl A^*\mcl Q^*\le -\alpha \mcl Q \mcl T$, Lemma~\ref{lem:LPI_bounding} implies $\dot V$ is Lyapunov negative. We conclude Exp. PIE to PDE stability by Lem.~\ref{lem:PIE2PDE_exponential}.

For statement 2), let $V(\mbf x)=\ip{\mbf x}{\mcl Q\mcl T \mbf x}$. Then as in the proof of Lem.~\ref{lem:PIE2PDE_lyapunov_LPI}, $V$ is PIE positive and PIE bounded. As in the proof of statement 1), $\dot V$ Lyapunov negative. We conclude Exp. PIE stability by Lem.~\ref{lem:PIE2PDE_exponential}.

For statement 3), let $V(\mbf x)=\ip{\mbf x}{\mcl T^* \mcl P \mcl T\mbf x}$. Then as in the proof of Lemma~\ref{lem:PIE2PDE_lyapunov_LPI}, $V$ is PDE positive and PDE bounded.  As in the proof of Lemma~\ref{lem:PIE2PDE_lyapunov_LPI}, if $\mcl T \dot{\mbf x}(t)=\mcl A \mbf x(t)$, we have
\[
\frac{d}{dt}V(\mbf x(t))=\ip{\mcl A\mbf x(t)}{ \mcl P \mcl T\mbf x(t)}+\ip{\mcl T \mbf x(t)}{\mcl P\mcl A \mbf x(t)}
\]
and since  $\mcl T^* \mcl P \mcl A+\mcl A^* \mcl P \mcl T\le -\alpha\mcl T^* \mcl P \mcl T$, Lemma~\ref{lem:LPI_bounding} implies $\dot V$ is Lyapunov negative. We conclude Exp. PDE stability by Lem.~\ref{lem:PIE2PDE_exponential}.

For statement 4), let $V(\mbf x)=\ip{\mbf x}{\mcl Q\mcl T \mbf x}$. As in statement 1), $V$ is PIE positive and $\mcl Q \mcl T\le C \mcl T^* \mcl T$ implies $V$ is PDE bounded by Lem.~\ref{lem:LPI_bounding}. As in statement 1) $\dot V$ is Lyapunov negative. We conclude Exp. PDE to PIE stability by Lem.~\ref{lem:PIE2PDE_exponential}.
\end{proof}
\end{note}

\vspace{3mm}

\begin{note}\label{note:exp_LPI_alt}
If we wish to avoid bilinearity in decision variables $\alpha$ and $\mcl Q$ or $\mcl P$ present in the LPI conditions of Lem.~\ref{lem:exp_stable_LPIs}, we may replace the Lyapunov negativity conditions in Lem.~\ref{lem:exp_stable_LPIs} with PIE or PDE negativity. Recalling the relationship between Lyapunov, PDE, and PIE negativity, we have the following.
\begin{lem}[Alternative LPIs for Exp. stability]\label{lem:exp_stable_LPIs_v2}
Given $\mcl T,\mcl A$, the PIE $\mcl T \dot{\mbf x}=\mcl A \mbf x$ is
  \begin{enumerate}
\item Exp. PIE to PDE stable if  $\mcl Q \mcl T=(\mcl Q \mcl T)^* \ge \epsilon \mcl T^* \mcl T$, $\mcl Q\mcl A+\mcl A^*\mcl Q^*\le -\alpha I$
\item Exp. PIE stable if $\mcl Q \mcl T=(\mcl Q \mcl T)^* \ge \epsilon I$  and $\mcl Q\mcl A+\mcl A^*\mcl Q^*\le -\alpha I$
\item Exp. PDE stable if $\mcl P \ge \epsilon I$, $\mcl T^* \mcl P \mcl A+\mcl A^* \mcl P \mcl T\le -\alpha\mcl T^* \mcl T$
\item Exp. PDE to PIE stable if $\mcl Q \mcl T=(\mcl Q \mcl T)^* \ge \epsilon I$, $\mcl Q \mcl T\le C \mcl T^* \mcl T$, and $\mcl Q\mcl A+\mcl A^*\mcl Q^*\le -\alpha I$ 
  \end{enumerate}
  for some $\mcl Q,\mcl P \in \Pi_3$ and $\epsilon,C,\alpha>0$.
\end{lem}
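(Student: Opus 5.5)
The plan is to reduce each statement to the corresponding item of Lemma~\ref{lem:PIE2PDE_exponential}. We keep the same candidate Lyapunov functions as in the proof of Lemma~\ref{lem:exp_stable_LPIs}, and show that the PIE or PDE negativity conditions assumed here imply Lyapunov negativity. This uses the upper bound on $V$, which is the relation recalled after Definition~\ref{def:Lyap_negative}. Positivity and boundedness of $V$ carry over verbatim from the proof of Lemma~\ref{lem:PIE2PDE_lyapunov_LPI}. Only the derivative condition needs new work.

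For statements 1), 2) and 4), take $V(\mbf x)=\ip{\mbf x}{\mcl Q\mcl T\mbf x}$.
\begin{itemize}
\item As in Note~\ref{note:lemma7_proof}, along solutions of $\mcl T\dot{\mbf x}=\mcl A\mbf x$ we have $\dot V(\mbf x)=\ip{\mbf x}{(\mcl Q\mcl A+\mcl A^*\mcl Q^*)\mbf x}\le-\alpha\norm{\mbf x}^2$. So $\dot V$ is PIE negative by Lemma~\ref{lem:LPI_bounding}.
\item Since $\mcl Q\mcl T\in\Pi_3\subset\mcl L(L_2)$, $V$ is PIE bounded: $V(\mbf x)\le c\norm{\mbf x}^2$ with $c:=\norm{\mcl Q\mcl T}_{\mcl L(L_2)}$.
\item Combining these gives $\dot V(\mbf x)\le-\frac{\alpha}{c}V(\mbf x)$, so $\dot V$ is Lyapunov negative with rate $\alpha/c$.
\item Positivity is exactly as in Note~\ref{note:lemma8_proof}. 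In statement 1), $\mcl Q\mcl T\ge\epsilon\mcl T^*\mcl T$ gives PDE positivity. In statements 2) and 4), $\mcl Q\mcl T\ge\epsilon I$ gives PIE positivity. In statement 4), $\mcl Q\mcl T\le C\mcl T^*\mcl T$ additionally gives PDE boundedness.
\item Items 1), 2) and 4) of Lemma~\ref{lem:PIE2PDE_exponential} then give the respective exponential stability notions.
\end{itemize}

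For statement 3), take $V(\mbf x)=\ip{\mcl T\mbf x}{\mcl P\mcl T\mbf x}$.
\begin{itemize}
\item As in Note~\ref{note:lemma8_proof}, $V$ is PDE positive and PDE bounded, with $V(\mbf x)\le\norm{\mcl P}\norm{\mcl T\mbf x}^2$.
\item The derivative is $\dot V(\mbf x)=\ip{\mbf x}{(\mcl T^*\mcl P\mcl A+\mcl A^*\mcl P\mcl T)\mbf x}\le-\alpha\norm{\mcl T\mbf x}^2$.
\item Hence $\dot V\le-\frac{\alpha}{\norm{\mcl P}}V$, which is Lyapunov negative. Item 3) of Lemma~\ref{lem:PIE2PDE_exponential} gives Exp. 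PDE stability.
\end{itemize}

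The only delicate point is that the new rate constants $\alpha/c$ and $\alpha/\norm{\mcl P}$ are finite and positive, i.e.\ that $c$ and $\norm{\mcl P}$ are nonzero.
\begin{itemize}
\item For $\norm{\mcl P}$: $\mcl P\ge\epsilon I$ forces $\norm{\mcl P}\ge\epsilon>0$.
\item For $c$ in statements 2) and 4): $\mcl Q\mcl T\ge\epsilon I$ forces $c\ge\epsilon>0$.
\item For $c$ in statement 1): if $\mcl Q\mcl T=0$, then $\epsilon\mcl T^*\mcl T\le0$ gives $\mcl T=0$, and the conclusion holds trivially. Otherwise $c>0$.
\end{itemize}
Once this is settled, the remaining steps are direct substitutions into Lemmas~\ref{lem:LPI_bounding} and~\ref{lem:PIE2PDE_exponential}.
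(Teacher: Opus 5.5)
Your proposal is correct and follows essentially the paper's route. In both, the upper bound on $V$ combined with PIE (or PDE) negativity gives Lyapunov negativity with rate $\alpha/C$, and the earlier exponential result then applies; the paper phrases this as reducing each item to the corresponding item of Lemma~\ref{lem:exp_stable_LPIs}. Two small differences: you treat item 4 via PIE boundedness plus PIE negativity, which matches the hypothesis $\mcl Q\mcl A+\mcl A^*\mcl Q^*\le-\alpha I$ more directly than the paper's grouping of item 4 with item 3, and your concern about $c=0$ is harmless but unnecessary, since any larger positive constant also bounds $V$.
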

\begin{proof}
Statements 1) and 2) replace $\dot V$ Lyapunov negative with $V$ PIE bounded and $\dot V$ PIE negative. $V$ PIE bounded and $\dot V$ PIE negative imply $\dot V$ Lyapunov negative by Lem.~\ref{lem:LPI_bounding} since $\mcl D \le -\alpha I$ and $\mcl P\le C I$ imply
\[
\mcl D \le -\alpha I\le -\frac{\alpha}{C}\mcl P. 
\]
Statements 3) and 4) replace $\dot V$ Lyapunov negative with $V$ PDE bounded and $\dot V$ PDE negative. $V$ PDE bounded and $\dot V$ PDE negative imply $\dot V$ Lyapunov negative by Lem.~\ref{lem:LPI_bounding} since $\mcl D \le -\alpha \mcl T^*\mcl T$ and $\mcl P\le C \mcl T^* \mcl T$ imply
\[
\mcl D \le -\alpha \mcl T^*\mcl T\le -\frac{\alpha}{C}\mcl P. 
\]
Thus all statements in Lemma~\ref{lem:exp_stable_LPIs_v2} imply the corresponding statements in Lemma~\ref{lem:exp_stable_LPIs}.
\end{proof}
\end{note}
Note that the alternative condition for Exp. PIE to PDE stable may be considerably stronger than the condition for Exp. PIE to PDE stability in Lem.~\ref{lem:exp_stable_LPIs}.
\vspace{3mm}

\begin{note}[Proof of Lemma~\ref{lem:FE_stability_LPI}]\label{note:lemma10_proof}

Recall the statement of Lemma~\ref{lem:FE_stability_LPI}.

\begin{lem}
Given $\mcl T,\mcl A$, the PIE $\mcl T \dot{\mbf x}=\mcl A \mbf x$ is
  \begin{enumerate}
\item FE PIE to PDE stable if $\mcl Q \mcl T=(\mcl Q \mcl T)^* \ge 0$, $\mcl Q\mcl A+\mcl A^*\mcl Q^*\le -\alpha \mcl T^* \mcl T$.
\item FE PIE stable if $\mcl Q \mcl T=(\mcl Q \mcl T)^* \ge 0$  and $\mcl Q\mcl A+\mcl A^*\mcl Q^*\le -\alpha I$.
\item FE PDE stable if $\mcl P \ge 0$, $\mcl T^* \mcl P \mcl A+\mcl A^* \mcl P \mcl T\le -\alpha \mcl T^* \mcl T$.
\item FE PDE to PIE stable if $\mcl Q \mcl T=(\mcl Q \mcl T)^* \ge 0$, $\mcl Q \mcl T\le C \mcl T^* \mcl T$, and $\mcl Q\mcl A+\mcl A^*\mcl Q^*\le -\alpha I$.
  \end{enumerate}
  for some $\mcl Q,\mcl P \in \Pi_3$ and $\epsilon,C,\alpha>0$.
\end{lem}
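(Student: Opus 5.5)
The plan is to follow the template of the proofs of Lemmas~\ref{lem:PIE2PDE_lyapunov_LPI} and~\ref{lem:exp_stable_LPIs}. In each case we choose one of the two canonical Lyapunov candidates. We then use Lem.~\ref{lem:LPI_bounding} to turn the operator inequalities into the positivity, boundedness and negativity properties required by Lem.~\ref{lem:FE_stability}, which then gives the conclusion.

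For statements 1), 2) and 4) we take $V(\mbf x)=V_2(\mbf x)=\ip{\mbf x}{\mcl Q\mcl T\mbf x}$.
\begin{itemize}
\item \textbf{Positive semidefinite.} Since $\mcl Q\mcl T=(\mcl Q\mcl T)^*\ge 0$, Lem.~\ref{lem:LPI_bounding} with $\mcl P\mapsto \mcl Q\mcl T$ shows that $V$ is positive semidefinite and $V(0)=0$.
\item \textbf{Bounds.} Since $\mcl Q,\mcl T\in\Pi_3\subset\mcl L(L_2)$, we have $\mcl Q\mcl T\le \norm{\mcl Q\mcl T}I$, so $V$ is PIE bounded; this covers statements 1) and 2). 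In statement 4), the added hypothesis $\mcl Q\mcl T\le C\mcl T^*\mcl T$ makes $V$ PDE bounded, again by Lem.~\ref{lem:LPI_bounding}.
\item \textbf{Derivative.} The computation in the proof of Lemma~\ref{lem:PIE2PDE_lyapunov_LPI} uses self-adjointness of $\mcl Q\mcl T$ and substitutes $\mcl T\dot{\mbf x}=\mcl A\mbf x$. It gives $\dot V(\mbf x)=\ip{\mbf x}{(\mcl Q\mcl A+\mcl A^*\mcl Q^*)\mbf x}$, so in the notation of Lem.~\ref{lem:LPI_bounding} we have $\mcl D=\mcl Q\mcl A+\mcl A^*\mcl Q^*$.
\item \textbf{Negativity.} The hypothesis $\mcl D\le-\alpha\mcl T^*\mcl T$ makes $\dot V$ PDE negative (statement 1). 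The hypothesis $\mcl D\le -\alpha I$ makes it PIE negative (statements 2 and 4).
\end{itemize}
The first, second and fourth bullets of Lem.~\ref{lem:FE_stability} then give FE PIE to PDE, FE PIE, and FE PDE to PIE stability, respectively.

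For statement 3) we take $V(\mbf x)=V_1(\mbf x)=\ip{\mbf x}{\mcl T^*\mcl P\mcl T\mbf x}$.
\begin{itemize}
\item \textbf{Positive semidefinite.} $\mcl P\ge 0$ gives $\mcl T^*\mcl P\mcl T\ge 0$, so $V$ is positive semidefinite.
\item \textbf{Bound.} $\mcl T^*\mcl P\mcl T\le\norm{\mcl P}\mcl T^*\mcl T$, so $V$ is PDE bounded. If $\mcl P=0$, any $C>0$ works.
\item \textbf{Derivative.} As in the proof of Lemma~\ref{lem:PIE2PDE_lyapunov_LPI}, $\dot V(\mbf x)=\ip{\mbf x}{(\mcl T^*\mcl P\mcl A+\mcl A^*\mcl P\mcl T)\mbf x}$.
\item \textbf{Negativity.} The hypothesis then makes $\dot V$ PDE negative.
\end{itemize}
The third bullet of Lem.~\ref{lem:FE_stability} yields FE PDE stability.

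Most of this is routine bookkeeping. The one real point of care is the derivative identity for $V_2$. It needs $\mcl Q\mcl T$ to be self-adjoint, so that $\ip{\dot{\mbf x}}{\mcl Q\mcl T\mbf x}=\ip{\mcl Q\mcl T\dot{\mbf x}}{\mbf x}$ and the term can be rewritten through $\mcl T\dot{\mbf x}=\mcl A\mbf x$. It also needs enough regularity for $\dot V$ to exist in the sense of Defn.~\ref{def:Lyap_negative}, which I would take from the classical well-posedness assumption as in the earlier lemmas.

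A second, minor point is that Lem.~\ref{lem:FE_stability} requires only positive semidefiniteness, not positivity. This is why the hypotheses use $\ge 0$ rather than $\ge\epsilon$, and it means the constant $\epsilon$ listed in the statement plays no role.
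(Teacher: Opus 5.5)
Your proposal is correct and follows the paper's proof essentially step for step. You use $V_2=\ip{\mbf x}{\mcl Q\mcl T\mbf x}$ for statements 1), 2), 4) and $V_1=\ip{\mbf x}{\mcl T^*\mcl P\mcl T\mbf x}$ for statement 3), with Lem.~\ref{lem:LPI_bounding} turning the operator inequalities into the hypotheses of Lem.~\ref{lem:FE_stability}; in statement 4) you correctly use $\mcl Q\mcl T\le C\mcl T^*\mcl T$ to obtain the PDE boundedness that Lem.~\ref{lem:FE_stability} requires, whereas the paper's written proof says only ``PIE bounded'' at that step, which is a slip.
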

\begin{proof}
For statement 1), let $V(\mbf x)=\ip{\mbf x}{\mcl Q\mcl T \mbf x}$. Then $\mcl Q \mcl T=(\mcl Q \mcl T)^* \ge 0$ implies $V$ is positive semidefinite. Furthermore, as in the proof of Lemma~\ref{lem:PIE2PDE_lyapunov_LPI}, $V$ is PIE bounded. By Lem.~\ref{lem:LPI_bounding}, $\mcl Q\mcl A+\mcl A^*\mcl Q^*\le -\alpha \mcl T^* \mcl T$, implies $\dot V$ is PDE negative. We conclude FE PIE to PDE stability by Lem.~\ref{lem:FE_stability}.

For statement 2), let $V(\mbf x)=\ip{\mbf x}{\mcl Q\mcl T \mbf x}$. Then as in the proof of statement 1), V is positive semidefinite and PIE bounded. By Lem.~\ref{lem:LPI_bounding}, $\mcl Q\mcl A+\mcl A^*\mcl Q^*\le -\alpha I$ implies $\dot V$ is PIE negative. We conclude FE PIE stability by Lem.~\ref{lem:FE_stability}.

For statement 3), let $V(\mbf x)=\ip{\mbf x}{\mcl T^* \mcl P \mcl T\mbf x}$. Then $\mcl P \ge 0$ implies $V$ is positive semidefinite. Furthermore, as in the proof of Lemma~\ref{lem:PIE2PDE_lyapunov_LPI}, $V$ PDE bounded. By Lem.~\ref{lem:LPI_bounding}, $\mcl T^* \mcl P \mcl A+\mcl A^* \mcl P \mcl T\le -\alpha \mcl T^* \mcl T$ implies $\dot V$ is PDE negative. We conclude FE PDE stability by Lem.~\ref{lem:FE_stability}.

For statement 4), let $V(\mbf x)=\ip{\mbf x}{\mcl Q\mcl T \mbf x}$. Then as in the proof of statement 1), V is positive semidefinite and PIE bounded.  As in statement 2) $\dot V$ is PIE negative. We conclude FE PDE to PIE stability by Lem.~\ref{lem:FE_stability}.
\end{proof}

\end{note}

\end{document}